\documentclass[a4paper,10pt]{amsart}
\usepackage[utf8]{inputenc}
\usepackage[T1]{fontenc}

\usepackage{amsmath}
\usepackage{amssymb}
\usepackage{amsthm}
\usepackage{mathtools}

\usepackage{eucal} 
\usepackage{mathdots}
\usepackage[colorlinks=true,linkcolor=red,citecolor=blue,urlcolor=green]{hyperref}
\usepackage[capitalise]{cleveref}
\usepackage{enumitem}
\usepackage{mathrsfs}  

\usepackage{tikz-cd}
\usepackage{adjustbox}
\usepackage{contour}
\usepackage{ulem}
\usepackage{faktor}

\numberwithin{equation}{section}
\crefname{equation}{}{}

\crefformat{section}{\S#2#1#3}
\crefmultiformat{section}{\S\S#2#1#3}{ and~#2#1#3}{, #2#1#3}{, and~#2#1#3}

\contourlength{0.8pt}

\newcommand{\ULL}[1]{%
  \uline{\phantom{#1}}%
  \llap{\contour{white}{#1}}%
}

\setcounter{tocdepth}{1}

\newtheorem{theorem}{Theorem}[section]
\newtheorem{prop}[theorem]{Proposition}

\newtheorem{lemma}[theorem]{Lemma}

\newenvironment{customthm}[1]
  {\innercustomthm}
  {\endinnercustomthm}

\theoremstyle{definition}

\newtheorem{rmk}[theorem]{Remark}

\newtheorem{ex}[theorem]{Example}

\DeclareMathOperator{\Hom}{\mathsf{Hom}}

\DeclareMathOperator{\Ext}{\mathsf{Ext}}
\DeclareMathOperator{\Tor}{\mathsf{Tor}}

\newcommand{\Acal}{\mathcal{A}}

\newcommand{\Ccal}{\mathcal{C}}
\newcommand{\Dcal}{\mathcal{D}}

\newcommand{\Fcal}{\mathcal{F}}
\newcommand{\Gcal}{\mathcal{G}}

\newcommand{\Pcal}{\mathcal{P}}

\newcommand{\Scal}{\mathcal{S}}
\newcommand{\Tcal}{\mathcal{T}}

\newcommand{\Xcal}{\mathcal{X}}
\newcommand{\Ycal}{\mathcal{Y}}

\newcommand{\Nbb}{\mathbb{N}}

\newcommand{\Mod}[1]{\mathsf{Mod}\mbox{-}#1}

\renewcommand{\mod}[1]{\mathsf{mod}\mbox{-}#1}
\newcommand{\modkappa}[1]{\mathsf{mod}_\kappa \mbox{-}#1}

\newcommand{\depth}{\mathsf{depth}}
\newcommand{\Kdim}{\mathsf{dim}}

\newcommand{\Spec}[1]{\mathsf{Spec}(#1)}

\newcommand*{\Perp}[1]{{}^{\perp_{#1}}}

\newcommand{\Add}{\mathsf{Add}}

\newcommand{\pp}{\mathfrak{p}}

\newcommand{\qq}{\mathfrak{q}}
\newcommand{\mm}{\mathfrak{m}}

\newcommand{\height}{\mathsf{height}}

\newcommand{{\tst}}{\textit{t}-}
\newcommand{\pd}{\mathsf{pd}}
\newcommand{\fd}{\mathsf{fd}}

\newcommand{\Findim}{\mathsf{Findim}}
\newcommand{\findim}{\mathsf{findim}}
\newcommand{\WFindim}{\mathsf{W.Findim}}

\newcommand{\Filt}{\mathsf{Filt}}

\newcommand{\CustomBig}{} 
\newcommand{\newterm}[1]{\ULL{\text{#1}}}
\newcommand\scalemath[2]{\scalebox{#1}{\mbox{\ensuremath{\displaystyle #2}}}}
\newcommand*{\newfaktor}[2]{
\scalemath{0.8}{ 
  \raisebox{0.5\height}{\ensuremath{#1}}
  \mkern-5mu\diagup\mkern-4mu
  \raisebox{-0.5\height}{\ensuremath{#2}}
}
}

\newcommand{\quotmod}[2]{\newfaktor{#1}{#2}}

\title[The finite type and Serre's conditions]{The finite type of modules of bounded projective dimension and Serre's conditions}
\author{Michal Hrbek}
\address[M. Hrbek]{Institute of Mathematics of the Czech Academy of Sciences, \v{Z}itn\'{a} 25, 115 67 Prague, Czech Republic}
\email{hrbek@math.cas.cz}

\author{Giovanna Le Gros}
\address[G. Le Gros]{Departament de Matem\`atiques,
Universitat Aut\`onoma de Barcelona,  08193 Bellaterra
(Barcelona), Spain}
\email{giovanna.legros@uab.cat}

\subjclass[2020]{Primary: 16D90, 13H10; Secondary: 16E30, 16G99.}

\thanks{The first author was supported by the GAČR project 23-05148S and the Academy of Sciences of the Czech Republic (RVO 67985840).}
\begin{document}
\begin{abstract}
Let $R$ be a commutative noetherian ring. We prove that the class of modules of projective dimension bounded by $k$ is of finite type if and only if $R$ satisfies Serre's condition $(S_k)$. In particular, this answers positively a question of Bazzoni and Herbera in the specific setting of a Gorenstein ring. Applying similar techniques, we also show that the $k$-dimensional version of the Govorov-Lazard Theorem holds if and only if $R$ satisfies the ``almost'' Serre condition $(C_{k+1})$.
\end{abstract}
\maketitle
\section{Introduction}

For an associative unital ring, properties of classes of modules of projective dimension bounded by some positive integer provides large insight to both the module category over the ring, as well as intrinsic properties of the ring itself. This is exemplified by many classical results, for example, over a commutative noetherian ring, the relationship between the Krull dimension and the big finitistic dimension due to Bass and Raynaud-Gruson, and if the ring is moreover local, the Auslander-Buchsbaum formula. 

We are interested in when the class of modules of projective dimension at most $k$, denoted by $\Pcal_k$, can be built by certain subsets with some size restriction. In the simplest case, the class of projective modules are direct sums of countably generated ones, however, an analogous result for classes of modules of higher projective dimension does not necessarily exist in such simple terms. Nevertheless, Aldrich, Enochs, Jenda, and Oyonarte, \cite{AEJO}, showed there exists an infinite cardinal $\kappa$ that depends on the cardinality of the ring $R$, such that every module in $\Pcal_k$ is filtered by modules which are strongly $\kappa$-presented, a result which is a generalises ideas of Raynaud-Gruson. This result has important consequences on approximation theory and relative homological algebra, a theory which advanced after the introduction of cotorsion pairs. 

A cotorsion pair $(\Xcal,\Ycal)$ consists of two subcategories of $\Mod R$ maximal with respect to mutual $\Ext_R^1$-orthogonality and it is complete when its constituents give rise to left and right module approximations. The crucial result in the theory of cotorsion pairs is a theorem of Eklof and Trlifaj \cite{ET01}, which establishes the completeness of a cotorsion pair whenever its left-hand class $\Xcal$ is deconstructible: Each module from $\Xcal$ is a direct summand of a module filtered by modules of $\Xcal$ which are $\kappa$-presentable for some fixed cardinal $\kappa$. As mentioned above, in the case of $\Pcal_k$, such a cardinal, which depends on the size of the ring, was shown to exist in \cite{AEJO}.

 While Aldrich, Enochs, Jenda, and Oyonarte's result demonstrates that there exists such a cardinal $\kappa$, it does not provide a minimal one. Consequently, one of the aims of this paper is to find when, by characterising the ring, all modules in $\Pcal_k$ are filtered by its subset of strongly finitely presented modules --- the modules in $\Pcal_k$ which can be resolved by finitely generated projectives, denoted  $\Pcal_k^{<\aleph_0}$.
 
In different terminology, we say that the cotorsion pair $(\Pcal_k,\Pcal_k\Perp{})$ is of finite type if for any module $M$ we can test the Ext-orthogonality condition $\Ext_R^1(\Pcal_k,M) = 0$ just on strongly finitely presented modules $\Pcal_k^{<\aleph_0}$ of $\Pcal_k$. This question has been considered in different contexts, for example, for an $n$-dimensional noetherian ring, this is equivalent to the large and small restricted injective dimensions of Christensen, Foxby, and Frankild coinciding, see \cite{CFF02}.

 By the Eklof--Trlifaj Theorem, $(\Pcal_k,\Pcal_k\Perp{})$ being of finite type is equivalent to any module from $\Pcal_k$ being a direct summand of a module filtered by $\Pcal_k^{<\aleph_0}$. Since every projective module is a direct summand in a free one, we see immediately that $(\Pcal_0,\Pcal_0\Perp{})$ is of finite type for any ring $R$. In particular, the cotorsion pairs of finite type are exactly the tilting cotorsion pairs. 
 
 A tilting cotorsion pair $(\Acal, \Tcal)$ is a cotorsion pair such that $\Tcal$ is the first right Ext-orthogonal of a set of strongly finitely presented modules of bounded projective dimension closed under syzygies. In particular, the tilting class $\Tcal$ is closed under arbitrary direct sums. Furthermore, over commutative rings, tilting classes were classified in terms of specific finite sequences of Gabriel topologies, \cite{HS20} and \cite{AHPS14}, and over commutative noetherian rings this restricts to descending sequences of Gabriel topologies $(\Gcal_0, \dots, \Gcal_{n-1})$ where $\Gcal_i$ contains ideals of grade at least $i+1$. 
 Thus, with this correspondence, if the cotorsion pair $(\Pcal_k,\Pcal_k\Perp{})$ is of finite type, then in particular the right-hand class $\Pcal_k\Perp{}$ can be described explicitly as modules which satisfy certain divisibility conditions with respect to the ideals in the associated sequence of Gabriel topologies.

There are already some preliminary results in the literature. When $(\Pcal_1,\Pcal_1\Perp{})$ is of finite type was studied in detail by Bazzoni and Herbera in \cite{BH09}. It turns out that already this cotorsion pair might fail to be of finite type, in general. In particular, it is shown in \cite[Theorem 8.6]{BH09} that for a commutative noetherian ring $R$, $(\Pcal_1,\Pcal_1\Perp{})$ is of finite type if and only if the classical ring of quotients $Q$ of $R$ has big finitistic dimension zero. If $R$ is of Krull dimension at most one, this is equivalent to $R$ being Cohen--Macaulay. Recently, we have shown in \cite[Theorem 3.14]{HLG23} that for a commutative noetherian ring of Krull dimension $d$, $(\Pcal_d,\Pcal_d\Perp{})$ is of finite type if and only if $R$ is Cohen--Macaulay. However, whether $(\Pcal_k,\Pcal_k\Perp{})$ is of finite type for the intermediate dimensions $1<k<\Kdim(R)$, even when $R$ is a Gorenstein ring, remained unknown, \cite[Example 9.4]{BH09}. The main result of this paper answers the last question in the affirmative. In fact, we prove the following complete characterisation:
\begin{customthm}{A}\label{main-A}
	Let $R$ be a commutative noetherian ring and $k \geq 0$. Then the cotorsion pair $(\Pcal_k,\Pcal_k\Perp{})$ is of finite type if and only if $R$ satisfies Serre's condition $(S_k)$. 
	
	In particular, $(\Pcal_k,\Pcal_k\Perp{})$ is of finite type for all $k>0$ if and only if $R$ is Cohen--Macaulay.
\end{customthm}
We remark that our proof uses more elementary techniques than those used in \cite{BH09} and \cite{HLG23}, avoiding both the Mittag-Leffler condition techniques, as well as any derived or tilting theory (apart from using divisible preenvelopes). Instead, our approach relies on a combination of several kinds of commutative-algebraic reduction arguments, allowing for an induction on the parameter $k$. 

Additionally, our technique allows us to study yet another natural problem. Recall that the Govorov-Lazard Theorem asserts that, over any ring, any flat module can be written as a direct limit of finitely generated projective modules. One can wonder about a higher dimensional version: When is any module of flat dimension at most $k$ a direct limit of strongly finitely presented modules of projective dimension at most $k$? In the case of a commutative noetherian ring, we obtain the following characterisation involving the notion of almost Cohen--Macaulay rings (\cite{H98,K01,Ion}):
\begin{customthm}{B}\label{main-B}
	Let $R$ be a commutative noetherian ring and $k \geq 0$. Then $\Fcal_k = \varinjlim \Pcal_k^{<\aleph_0}$ if and only if $R$ satisfies the almost Serre's condition $(C_{k+1})$. 
	
	In particular, $\Fcal_k = \varinjlim \Pcal_k^{<\aleph_0}$ for all $k>0$ if and only if $R$ is almost Cohen--Macaulay.
\end{customthm}


\section{Preliminaries}\label{s:preliminaries}
Let $R$ be an associative unital ring. We denote by $\Mod R$ the category of all right $R$-modules and by $\mod R$ be the (full, isomorphism-closed) subcategory consisting of right $R$-modules which admit a resolution by finitely generated projective $R$-modules, which we refer to as the \newterm{strongly finitely presented modules}. Analogously, for a cardinal $\kappa$, let $\modkappa R$ denote the (full, isomorphism-closed) subcategory consisting of those modules which admit a resolution by $\kappa$-generated projective $R$-modules, which we refer to as the strongly $\kappa$-presented modules.
 For any module $M$, let $\Add(M)$ denote the subcategory consisting of all modules which are isomorphic to a direct summand of the coproduct $M^{(X)}$ for some set $X$. 
For a class $\Ccal$, let $\Ccal^\oplus$ denote the class of modules which are direct summands of a module in $\Ccal$, and $\varinjlim \Ccal$ the class of modules which are direct limits of modules in $\Ccal$. 

\subsection{}\label{ss:dimensions} For $n \geq 0$, let $\Pcal_n(R) = \{M \in \Mod R \mid \pd_R M \leq n\}$, and $\Fcal_n(R) = \{M \in \Mod R \mid \fd_R M \leq n\}$ denote the subcategories of $\Mod R$ consisting of all modules of projective, and respectively flat dimension bounded above by $n$. Furthermore, for a cardinal $\kappa$, we let $\Pcal_n^{\leq\kappa}(R) = \Pcal_n(R) \cap \modkappa R$, the set of at most $\kappa$-presented modules of projective dimension at most $n$. In addition, we put $\Pcal_n^{<\aleph_0}(R) = \Pcal_n(R) \cap \mod R$. We use the notation $\Pcal(R) = \bigcup_{n \geq 0}\Pcal_n(R)$ for modules of finite projective dimension, similarly we put $\Fcal(R) = \bigcup_{n \geq 0}\Fcal_n(R)$. We often omit the reference to the ring when the ring is clear from the context and write simply $\Pcal_n$, $\Fcal_n$, $\Pcal$, $\Fcal$, $\Pcal_n^{\leq\kappa}$, or $\Pcal_n^{<\aleph_0}$.

\subsection{}\label{ss:cotpairs} Given a subcategory $\Ccal$ of $\Mod R$ we use the notation $\Ccal\Perp{1}= \{M \in \Mod R \mid \Ext_R^1(C,M) = 0, ~\forall C \in \Ccal\}$ and $\Ccal\Perp{}= \{M \in \Mod R \mid \Ext_R^i(C,M) = 0 ~\forall C \in \Ccal, ~\forall i>0\}$; we also define $\Perp{1}\Ccal$ and $\Perp{}\Ccal$ analogously and we omit the curly brackets whenever $\Ccal = \{C\}$ for a module $C$. A \newterm{cotorsion pair} in $\Mod R$ is a pair $(\Xcal,\Ycal)$ of subcategories of $\Mod R$ such that $\Ycal = \Xcal\Perp{1}$ and $\Xcal = \Perp{1}\Ycal$. A cotorsion pair is \newterm{complete} if for any $M \in \Mod R$ there are exact sequences: 
$$0 \to M \to Y^M \to X^M \to 0$$ 
and 
$$0 \to Y_M \to X_M \to M \to 0$$ 
with $X^M,X_M \in \Xcal$ and $Y^M,Y_M \in \Ycal$. The first exact sequence (or, the map $M \to Y^M$) is called a \newterm{special $\Ycal$-preenvelope}, while dually we talk about \newterm{special $\Xcal$-precovers}.
The cotorsion pair is \newterm{hereditary} if $\Ext_R^i(X,Y) = 0$ for all $X \in \Xcal$, $Y \in \Ycal$, and $i>0$. Finally, a cotorsion pair is \newterm{of finite type} if there is $n \geq 0$ and a subset $\Scal$ of $\Pcal_n^{<\aleph_0}$ such that $\Ycal = \Scal\Perp{}$.

\subsection{}\label{ss:filtrations} Let $\mu$ be an ordinal, and consider a collection of submodules of an $R$-module $M$, that is $\{M_\alpha \mid M_\alpha \subseteq M, \alpha \leq \mu\}$ which are ordered under inclusion. If $M_0 = 0$, and $M_\beta = \bigcup_{\alpha<\beta}M_\alpha$ for all limit ordinals $\beta \leq \mu$, this collection of submodules is called a \newterm{continuous chain} of modules. Given a class of $R$-modules $\Ccal$, an $R$-module $M$ is said to be $\Ccal$-filtered if it has a continuous chain of submodules $\{M_\alpha \mid M_\alpha \subseteq M, \alpha \leq \mu\}$ with $M_\mu = M$ for an ordinal $\mu$, such that for every successor ordinal $\alpha \leq\mu$, $\quotmod{M_{\alpha+1}}{M_{\alpha}} \in \Ccal$. The continuous chain $\{M_\alpha \mid M_\alpha \subseteq M, \alpha \leq \mu\}$ is called a \newterm{$\Ccal$-filtration} of $M$.
The class of modules which have a filtration by modules in $\Ccal$ are denoted $\Filt(\Ccal)$.

\subsection{}\label{ss:compcotpairs} Any class $\Ccal$ generates a cotorsion pair, that is $({}^{\perp_1}(\Ccal^{\perp_1}),\Ccal^{\perp_1})$ is a cotorsion pair. If there exists a set $\Scal$ of modules which generates a cotorsion pair $(\Xcal, \Ycal)$, then the cotorsion pair is complete, and moreover $\Xcal$ is exactly the class of direct summands of $\{R\}\cup\Scal$-filtered modules, $\Filt(\{R\}\cup\Scal)^\oplus$. This follows from the Eklof Lemma, and results of Eklof and Trlifaj, \cite{ET01}, or see \cite[\S 8]{GT12} for more details.

\subsection{}\label{ss:finitetype} Suppose $R$ is $\kappa$-noetherian for some cardinal $\kappa$, that is, every right ideal of $R$ is at most $\kappa$-generated, and fix $n \in \Nbb$. Then the class $\Pcal_n$ is $\Pcal_n^{\leq \kappa}$-filtered, or $\Pcal_n = \Filt(\Pcal_n^{\leq \kappa})^\oplus$ \cite[\S 8]{GT12} or \cite{AEJO}. 
In particular, over a noetherian ring, the class $\Pcal_n$ is $\Pcal_n^{\leq \aleph_0}$-filtered, which is a result due to Raynaud and Gruson, \cite[Corollaire (3.2.5)]{RG71}. 

We are particularly interested in when the class $\Pcal_n$ is of finite type, that is when every module in $\Pcal_n$ is a direct summand of a $\Pcal_n^{<\aleph_0}$-filtered module, or equivalently when the cotorsion pairs $(\Pcal_n, \Pcal_n^\perp)$ and $({}^\perp((\Pcal_n^{<\aleph_0})^\perp),(\Pcal_n^{<\aleph_0})^\perp)$ coincide. We will denote the latter cotorsion pair by $(\Acal_n, \Tcal_n)$, a notation that arises from the fact that $\Tcal_n$ is the minimal $n$-tilting class.

More generally, the class $\Filt(\Pcal_n^{\leq \kappa})^\oplus$ always forms the left-hand class of a cotorsion pair for any cardinal $\kappa$. Additionally, the $n$-tilting classes are exactly the right-hand classes in cotorsion pairs generated by strongly finitely presented modules of projective dimension at most $n$, see \cite{BH08, BS07} or \cite[\S 13]{GT12}. 

\subsection{}\label{ss:injlim} For a class $\Ccal$, $\varinjlim \Ccal$ is not necessarily closed under direct limits. However, if $\Ccal$ is a set consisting of finitely presented modules which are closed under finite direct sums, then $\varinjlim \Ccal$ is closed under direct limits, \cite{Lenz83}. Moreover, if $\Ccal$ consists of strongly finitely presented modules, there is a cotorsion pair $(\varinjlim \Ccal, (\varinjlim \Ccal)^\perp)$ \cite[Corollary 2.4]{AHT04}.

\subsection{}\label{ss:findim} Recall that the \newterm{finitistic dimension} of $R$ is defined as 
$$\Findim(R) = \sup \{\pd_R M \mid M \in \Pcal\},$$
while the \newterm{small finitistic dimension} of $R$ is 
$$\findim(R) = \sup \{\pd_R M \mid M \in \Pcal^{<\aleph_0}\}.$$
In the last section, we will also use the \newterm{weak finitistic dimension} of $R$ defined as
$$\WFindim(R) = \sup \{\fd_R M \mid M \in \Fcal\}.$$

\subsection*{Commutative rings} From now on, let $R$ be a commutative ring.

\subsection{}\label{ss:notationmultsubsets} Let $R$ be a commutative ring and $s \in R$ and $M$ and $R$-module. We will say that $s$ is \newterm{regular} if it does not divide zero, and we will let $\Sigma \subset R$ denote the multiplicative subset of regular elements of $R$. 
The \newterm{$s$-socle} of $M$, denoted $M[s]$, is the submodule $\{m \in M \mid sm=0\}$ of $M$. It is straightforward to see that for a regular element $s \in R$, $M[s] \cong \Tor_1^R(M, \quotmod{R}{sR}) \cong \Hom_R(\quotmod{R}{sR}, M)$ by applying the appropriate functors to the short exact sequence 

\[
\begin{tikzcd}
0 \arrow[r] &R \arrow[r,"{(-)\cdot s}"] &  R \arrow[r] & \quotmod{R}{sR} \arrow[r] &0.
\end{tikzcd}
\]

Consider a multiplicative subset $S$ of $R$ (of not-necessarily regular elements). The class of $S$-divisible modules, denoted $\Dcal_S$, is defined as $\{D \in \Mod R \mid \forall s \in S, sD = D  \}$. We also define the class of $S$-torsion modules $\{M \in \Mod R \mid \forall m \in M, \exists s \in S\text{ such that } ms = 0 \}$. Therefore, a module is $S$-torsion if and only if $M = \bigcup_{s \in S}M[s]$. For a module $M$, we denote the maximal $S$-torsion submodule of $M$ as $\Gamma_S(M)$. 

Additionally, recall that the module $R[S^{-1}]$ is a direct limit of copies of $R$, and $\quotmod{R[S^{-1}]}{R}$ can be seen as the direct limit of modules of the form $R/sR$ for $s \in S$, see \cite[Example IX.1.2]{SS75}.
If moreover $S$ is a multiplicative subset of regular elements, then $\Gamma_S(M) \cong \Tor^R_1(M, \quotmod{R[S^{-1}]}{R})$.


\subsection{}\label{ss:multsubsets} We will be interested in a particular collection of complete cotorsion pairs which arise from particular multiplicative subsets of $R$. Let $R$ be a commutative ring and consider any countably generated multiplicative subset $S$ of $R$. Then there is a classical construction which allows one to conclude that the localisation of $R$ at $S$ has projective dimension at most one over $R$, that is $\pd_R R[S^{-1}] \leq 1$.

 This can be seen using the direct limit presentation of $R[S^{-1}]$. That is, if $S=\{s_1, s_2, \dots \}$ then we can see $R[S^{-1}]$ as the direct limit of homomorphisms
 \begin{equation}\label{eq:RS_colim}
R[S^{-1}]=  \varinjlim( R \overset{s_1}\to R \overset{s_1s_2}\to \cdots ),
 \end{equation} 
and the direct limit presentation of $R[S^{-1}]$ is 
\[
0\to \bigoplus_{n\in \Nbb}R\overset{\phi}\to \bigoplus_{n\in \Nbb}R\to R[S^{-1}]\to 0\]
where if $\{e_i\}_{i \in \Nbb}$ and $\{f_i\}_{i \in \Nbb}$ are basis elements of the first two terms, $\phi (e_i) = f_i -s_1 \cdots s_if_{i+1}$.

\subsection{}\label{ss:regmultsubsets}
If the countably generated multiplicative subset $S$ in \cref{ss:multsubsets} is made up of regular elements of $R$, then we can say more. In particular interested in the complete hereditary cotorsion pair generated by $\{\quotmod{R}{sR}\}_{s \in S}$, which we will denote $(\Acal_S, \Dcal_S)$. The class $\Dcal_S$ is exactly the class of modules divisible by $s$ for every $s \in S$, and $\Acal_S$ is the class of modules which direct summands of modules filtered by the set  $\{R\}\cup\{\quotmod{R}{sR}\}_{s \in S}$. This cotorsion pair is of finite type, and $\Acal_S \subseteq \Pcal_1$ since if $s \in S$ is a regular element, then $\pd_R \quotmod{R}{sR} \leq 1$.

Furthermore, it is straightforward to see that $R[S^{-1}] \in \Acal_S$ as each of the homomorphisms in the direct limit \cref{eq:RS_colim} are monomorphisms with cokernels in $\{\quotmod{R}{sR}\}_{s \in S}$. Therefore, the direct limit forms a continuous filtration of $R[S^{-1}]$ by the set  $\{R\}\cup\{\quotmod{R}{sR}\}_{s \in S}$. 

The cotorsion pair $(\Acal_S, \Dcal_S)$ is complete, therefore $\Acal_S$ is a special precovering class and $\Dcal_S$ is a special preenveloping class. In particular, there is the following special $\Dcal_S$-preenvelope of $R$.
\[
0 \to R \to R[S^{-1}] \to \quotmod{R[S^{-1}]}{R} \to 0
\]
In particular, $R[S^{-1}] \oplus \quotmod{R[S^{-1}]}{R}$ is a $1$-tilting module and $\Dcal_S$ its associated $1$-tilting class, see \cite{AHHT05}.

\subsection*{Commutative noetherian rings} From now on, let $R$ be a commutative noetherian ring.

\subsection{}\label{ss:RG} By classical results of Bass \cite{B62} and Raynaud-Gruson \cite{RG71}, $\Findim(R) = \Kdim(R)$, \cite[Théor\`{e}me 3.2.6]{RG71}. Assume now that $\Kdim(R)<\infty$. Then any flat $R$-module belongs to $\Pcal$ \cite{Jen70}, \cite[Corollaire 3.2.7]{RG71}, and therefore $\Pcal$ coincides with the class $\Fcal$ of all modules of finite flat dimension. It follows that $\Pcal$ can be described as the class of all modules of flat dimension bounded by $\Kdim(R)$. In symbols, we have $\Pcal_{\Kdim(R)} = \Pcal = \Fcal = \Fcal_{\Kdim(R)}$. 

If $R$ is local with maximal ideal $\mm$ then $\findim(R) = \depth(R)$ by the Auslander-Buchsbaum formula, where $\depth(R) = \min \{i \mid \Ext_R^i(\quotmod{R}{\mm},R) \neq 0 \}$.

Finally, for any $R$ we have the identity $\WFindim(R) = \sup \{\depth R_\pp \mid \pp \in \Spec R\}$, \cite[Proposition 5.1]{B62}.

\subsection{}\label{ss:serre} The following term was introduced by Serre in his characterisation of normal rings. For $k \in \Nbb$, a ring $R$ has \newterm{Serre's condition $(S_k)$}, if for every prime ideal $\pp $ of $R$, $\depth (R_\pp) \geq \min \{\height (\pp), k\}$. This condition has the property that $(S_{k+1})$ implies $(S_{k})$ for every $k>0$.
In particular, a ring is Cohen--Macaulay if and only if $(S_k)$ holds for every $k \in \Nbb$. All commutative noetherian rings satisfy $(S_0)$, and a commutative noetherian ring satisfies $(S_1)$ if and only if its classical ring of quotients is artinian, or equivalently if all associated prime ideals of $R$ are minimal primes.

\subsection{}\label{ss:ACM-cond} An analogous condition to Serre's condition $(S_k)$ to characterise almost Cohen--Macaulay rings was introduced by Ionescu \cite{Ion}, see also a more general version of Holmes \cite{H19}. Recall that a local ring $R$ is almost Cohen--Macaulay if $\Kdim(R)-1\leq \depth(R) \leq \Kdim(R)$, and for a non-local ring the condition is defined locally. 

Following \cite{Ion}, a ring $R$ satisfies condition $(C_k)$ if for every prime $\pp$ of $R$, $\depth (R_\pp) \geq \min \{\height (\pp), k\}-1$. Moreover, it is shown that a ring $R$ has $(C_k)$ for every $k \in \Nbb$ if and only if $R$ is almost Cohen--Macaulay, that $(C_{k+1})$ implies $(C_{k})$, and additionally that $(S_{k})$ implies $(C_{k})$.

The following lemma is a straightforward consequence of well-known properties of the depth and grade. We include it here for completeness. 

\begin{lemma}\label{serrequotientring}
Let $R$ be a commutative noetherian ring and $a \in R$ a regular non-unit element of $R$. 
\begin{itemize}
	\item[(i)] If $R$ satisfies $(S_k)$ for a positive $k \in \Nbb$,  $\quotmod{R}{aR}$ satisfies $(S_{k-1})$.
	\item[(ii)] If $R$ satisfies $(C_k)$ for a positive $k \in \Nbb$,  $\quotmod{R}{aR}$ satisfies $(C_{k-1})$.
\end{itemize}
\end{lemma}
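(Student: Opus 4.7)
The plan is to localize and show that both (i) and (ii) reduce to a common local computation involving the behavior of depth and height when passing from $R_\pp$ to $R_\pp/aR_\pp$. Fix a prime $\qq$ of $\quotmod{R}{aR}$, say $\qq = \pp/aR$ for some $\pp \in \Spec{R}$ with $\pp \supseteq aR$. Since localization commutes with quotients, $(R/aR)_\qq \cong R_\pp/aR_\pp$, and $a$ remains regular on $R_\pp$ because localization preserves non-zero-divisors.

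The two key ingredients are then (a) a depth drop and (b) a height drop. For (a), since $a$ is a regular element of the local ring $R_\pp$ lying in its maximal ideal, any maximal $R_\pp$-regular sequence may be chosen to start with $a$, giving $\depth(R_\pp/aR_\pp) = \depth(R_\pp) - 1$. For (b), I claim $\height_{R/aR}(\pp/aR) \leq \height_R(\pp) - 1$: any strict chain of primes $\qq_0 \subsetneq \cdots \subsetneq \qq_n = \pp/aR$ in $R/aR$ pulls back to a chain in $R$ whose bottom element contains $a$, hence is not a minimal prime (as $a$ is regular), and can therefore be extended downward by one step to give a chain of length $n+1$ in $R$ ending at $\pp$.

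Combining (a) and (b) with the elementary identity $\min\{h,k\} - 1 = \min\{h-1, k-1\}$ (valid for $h,k \geq 1$, which applies here since $\height_R(\pp) \geq 1$ and $k \geq 1$) yields both parts in one stroke. For (i), $(S_k)$ gives $\depth(R_\pp) \geq \min\{\height_R(\pp), k\}$, so
\[
\depth((R/aR)_\qq) = \depth(R_\pp) - 1 \geq \min\{\height_R(\pp) - 1,\, k-1\} \geq \min\{\height_{R/aR}(\qq),\, k-1\},
\]
which is exactly $(S_{k-1})$ at $\qq$. For (ii), the same chain of inequalities with the weaker bound $\depth(R_\pp) \geq \min\{\height_R(\pp), k\} - 1$ from $(C_k)$ yields $\depth((R/aR)_\qq) \geq \min\{\height_{R/aR}(\qq), k-1\} - 1$, which is $(C_{k-1})$ at $\qq$.

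The only point requiring any care is the height inequality in (b), since we do not assume $R$ is catenary; but the argument above needs only that a regular element avoids every minimal prime, so it goes through in full generality. Everything else is a packaging of standard facts about depth under quotient by a regular element.
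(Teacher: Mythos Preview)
Your proof is correct and follows essentially the same approach as the paper: localize at a prime $\pp \supseteq aR$, use that depth drops by one when quotienting by a regular element, use that height drops by (at least) one, and combine via the identity $\min\{h,k\}-1=\min\{h-1,k-1\}$. The only noteworthy difference is in the height step: the paper invokes Krull's principal ideal theorem to get the \emph{equality} $\height(\pp/aR)=\height(\pp)-1$, whereas you prove only the \emph{inequality} $\height(\pp/aR)\leq\height(\pp)-1$ by a direct chain-lifting argument, correctly observing that this suffices and does not require any catenary hypothesis.
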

\begin{proof}
(i) Take a prime ideal $\pp$ which contains $a$. We wish to show that 
\[
\depth (\CustomBig(\quotmod{R}{aR}\CustomBig)_{\quotmod{\pp}{aR}} )\geq \min\{\height \CustomBig(\quotmod{\pp}{aR}\CustomBig), k-1\}.
\]
 By assumption we know that $\depth (R_{\pp}) \geq \min\{\height (\pp), k\}$. We know that $\depth ((\quotmod{R}{aR})_{\quotmod{\pp}{aR}} )  = \depth  (R_{\pp})-1$, see \cite[Proposition 1.2.10(d)]{BH98}, and $\height (\pp)-1 = \height (\quotmod{\pp}{aR})$ by Krull's principal ideal theorem.  
Therefore, 
\begin{align*}
\depth ( \CustomBig(\quotmod{R}{aR}\CustomBig)_{\quotmod{\pp}{aR}} )&= \depth  (R_{\pp})-1\\
					&  \geq \min\{\height (\pp), k\}-1 \\
					&=\min\{\height (\quotmod{\pp}{aR}), k-1\} ,
\end{align*}
as required.

(ii) The proof is identical to (i).

\end{proof}

The following proposition will be useful when generalising from the case that $Q$, the classical quotient ring of $R$, is not countably generated. The proposition comes essentially from \cite[Proposition 2.2]{HPS}, but our hypotheses are a little more general so we include a proof for completeness. 

\begin{prop}\cite[Proposition 2.2]{HPS}\label{countsubset} Let $R$ be a commutative ring and $M$ an infinitely generated and strongly countably presented
$R$-module of finite projective dimension. Let $T \subseteq R$ be a multiplicative subset such that $M[T^{-1}]$ is a projective $R[T^{-1}]$-module. Then there is a countable multiplicative subset $S \subseteq T$ such that $M[S^{-1}]$ is a projective $R[S^{-1}]$-module.
\end{prop}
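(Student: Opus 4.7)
My plan is to transfer the projectivity of $M[T^{-1}]$, which is witnessed by the splitting of a short exact sequence, down to a countable sub-localization $R[S^{-1}]$ by capturing all the defining data of the splitting with only countably many elements of $T$.

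First, I would fix a surjection $\pi\colon F \twoheadrightarrow M$ from a countably generated free module $F = R^{(\Nbb)}$ with basis $\{e_i\}_{i \in \Nbb}$, and let $K = \ker\pi$. Since $M$ is strongly countably presented, $K$ is countably generated; fix generators $\{k_l\}_{l \in \Nbb}$. The hypothesis on $M[T^{-1}]$ yields an $R[T^{-1}]$-linear section $\sigma\colon M[T^{-1}] \to F[T^{-1}]$ of the localised quotient map. Writing $\sigma(m_i/1) = \sum_j (a_{i,j}/t_{i,j}) e_j$ as a finite sum in $F[T^{-1}] = R[T^{-1}]^{(\Nbb)}$, I would collect all the denominators $t_{i,j}$ into a countable multiplicative subset $T_1 \subseteq T$, and use the same formulas to define an $R$-linear map $\sigma_1\colon F \to F[T_1^{-1}]$ lifting $\sigma$ at the level of generators.

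Next I would successively enlarge $T_1$ to countable subsets $T_2 \subseteq S \subseteq T$ in two steps. For each relation $k_l$, the element $\sigma_1(k_l)\in F[T_1^{-1}]$ has finite support and maps to $\sigma(\pi(k_l)/1) = 0$ in $F[T^{-1}]$, so some single element $t^{(l)} \in T$ annihilates it already in $F[T_1^{-1}]$; adjoining all the $t^{(l)}$ gives $T_2$ over which $\sigma_1$ kills $K$, factors through $M$, and extends to an $R[T_2^{-1}]$-linear map $\tilde\sigma\colon M[T_2^{-1}] \to F[T_2^{-1}]$. Then for each generator $m_i$, the element $\pi(\tilde\sigma(m_i/1)) - m_i/1 \in M[T_2^{-1}]$ dies in $M[T^{-1}]$ and is hence annihilated by some $t''_i \in T$; adjoining these to $T_2$ yields $S$, over which $\pi\circ \tilde\sigma = \id$ on the generators of $M[S^{-1}]$, and therefore everywhere. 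Consequently $M[S^{-1}]$ is a direct summand of the free module $F[S^{-1}]$, and hence a projective $R[S^{-1}]$-module.

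The hardest point is conceptual rather than technical: I need each of the three successive enlargements $T_1 \subseteq T_2 \subseteq S$ to adjoin only countably many new elements of $T$. This relies crucially on the hypothesis that $M$ is (strongly) countably presented, so that there are only countably many generators and only countably many relations to check, combined with the fact that any element of the free module $F$ has finite support, which allows each relation and each section identity to be witnessed by a single element of $T$.
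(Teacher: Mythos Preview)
Your proof is correct, and it takes a genuinely different route from the paper's. The paper argues by induction on the projective dimension $k$: using the Eilenberg swindle it replaces the resolution by one of countably generated free modules, cites \cite[Proposition~2.2]{HPS} verbatim for the base case $k=1$, and in the inductive step applies the hypothesis to the first syzygy $\Omega_1(M)$ to cut the projective dimension of $M[S^{-1}]$ down to $1$ before invoking the base case again over $R[S^{-1}]$.

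Your argument is instead a direct descent of the splitting: you never use the projective dimension hypothesis at all, only that $M$ has a countable presentation, and you capture the section $\sigma$ over a countable sub-localisation in three enlargement steps. This is essentially the content of the $k=1$ argument from \cite{HPS}, and you have observed that it already suffices in full generality. What you gain is a self-contained, more elementary proof that avoids both the induction and the Eilenberg swindle; what the paper's approach buys is modularity---it treats \cite[Proposition~2.2]{HPS} as a black box and reduces to it, which is appropriate given that the proposition is explicitly presented as a mild extension of that result.
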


\begin{proof}
Let $M$ be a strongly countably presented module of projective dimension $k$. In the case that $M$ is infinitely generated, without loss of generality, we can assume that $M$ has a resolution of length $k$ of countably generated free modules using the Eilenberg swindle. 

Given this free resolution, the case of $k = 1$ follows directly from the proof of \cite[Proposition 2.2]{HPS}. We use induction to extend to higher dimensions.

Let $M$ be a strongly countably presented module of projective dimension $k$ with first syzygy $\Omega_1(M)$, and suppose the statement holds for all natural numbers $i<k$. Then there exists a countably generated multiplicative subset $S$ such that $\Omega_1(M) \otimes_RR[S^{-1}] \in \Pcal_0(R[S^{-1}])$ by the induction hypothesis. Applying $(- \otimes_RR[S^{-1}])$ to the free presentation of $M$, we find an exact sequence
\[
\begin{tikzcd}
0 \arrow[r] &\Omega_1(M) \otimes_RR[S^{-1}] \arrow[r] &  R[S^{-1}]^{(\alpha)} \arrow[r] &M\otimes_RR[S^{-1}] \arrow[r] &0,
\end{tikzcd}
\]
so $M\otimes_RR[S^{-1}]$ is a countably presented module of projective dimension $1$ in $\Mod R[S^{-1}] $. Using the Eilenberg swindle again and the proof of  \cite[Proposition 2.2]{HPS}, we find a countable multiplicative subset $U$ of $R[S^{-1}]$ such that $(M\otimes_RR[S^{-1}])\otimes_{R[S^{-1}]}R[S^{-1}][U^{-1}] $. The multiplicative subset generated by $U\cup S$ considered as a multiplicative subset of $R$ is countable, as required.
\end{proof}

\section{Finite type of $(\Pcal_k, \Pcal_k^\perp)$ and Serre's condition $(S_k)$}\label{s:ftserre}
In this section, we prove \cref{main-A}. We note that the initial theorem does not require noetherianity.

\begin{lemma} \label{suffdiv}
Let $R$ be a commutative ring, $S$ a countably generated multiplicative subset of $R$ consisting of regular elements of $R$, $\Dcal_S$ the class of $S$-divisible modules, and $n \in \Nbb$. Let $M\in \Pcal_n$ and fix a special $\Dcal_S$-preenvelope of $M$, $0 \to M \to D \to A \to 0$. Then $M \in \Acal_n$ if and only if $D \in \Acal_n$. 

Moreover, if $M$ is $\kappa$-generated for some infinite cardinal $\kappa$, there exists a special $\Dcal_S$-preenvelope of $M$ which is $\kappa$-generated.

\end{lemma}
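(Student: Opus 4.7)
The plan is to reduce the equivalence to two structural facts about the cotorsion pair $(\Acal_n,\Tcal_n)$, namely that $\Acal_S\subseteq \Acal_n$ and that $(\Acal_n,\Tcal_n)$ is hereditary, and to carry out an economical Eklof--Trlifaj style pushout construction for the cardinality claim which adjoins $S$-divisibility witnesses at a rate of one per generator rather than one per element of $\Ext^1$.

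Since every $s\in S$ is regular, each $R/sR$ has a length-one resolution $0\to R\xrightarrow{\cdot s}R\to R/sR\to 0$ and so lies in $\Pcal_1^{<\aleph_0}\subseteq \Pcal_n^{<\aleph_0}$; trivially $R\in \Pcal_n^{<\aleph_0}$ as well. Because $\Acal_n$ is closed under transfinite extensions (Eklof lemma) and under direct summands, $\Acal_S\subseteq \Acal_n$, and in particular $A\in \Acal_n$. Moreover, $\Pcal_n^{<\aleph_0}$ is closed under syzygies, which makes $(\Acal_n, \Tcal_n)$ hereditary. The forward direction of the equivalence now follows from closure of the left class under extensions. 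For the backward direction, given $T\in\Tcal_n$, applying $\Hom_R(-,T)$ to the preenvelope sequence yields the exact segment $\Ext_R^1(D,T)\to \Ext_R^1(M,T)\to \Ext_R^2(A,T)$; the outer terms vanish (the rightmost by heredity), forcing $\Ext_R^1(M,T)=0$ and hence $M\in \Acal_n$.

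For the $\kappa$-generated preenvelope, fix a countable multiplicative generating subset $S_0\subseteq S$ and a generating set $\{m_\beta:\beta<\kappa\}$ of $M$. We construct an ascending chain $M=M_0\subseteq M_1\subseteq \cdots$ of $\kappa$-generated modules. Given $M_i$ with a generating set $\{g_\gamma:\gamma<\kappa\}$, let $M_{i+1}$ be the pushout of the injection $\bigoplus_{s\in S_0,\,\gamma<\kappa} R\to\bigoplus_{s\in S_0,\,\gamma<\kappa} R$ given by multiplication by $s$ on the $(s,\gamma)$-summand (injective by regularity of $s$) along the map $\bigoplus_{s\in S_0,\,\gamma<\kappa}R\to M_i$ sending the generator of the $(s,\gamma)$-summand to $g_\gamma$. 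Then $M_i\hookrightarrow M_{i+1}$ with quotient $\bigoplus_{s,\gamma} R/sR\in \Acal_S$, and $M_{i+1}$ acquires only $|S_0|\cdot \kappa=\kappa$ new generators, so it remains $\kappa$-generated. Setting $D=\bigcup_i M_i$, the module $D$ is $\kappa$-generated; any $d\in D$ lies in some $M_i$ as an $R$-linear combination of generators $g_\gamma$, each of which is $s$-divisible in $M_{i+1}$, whence $d\in sD$ for every $s\in S_0$ and thus $D\in\Dcal_S$. Finally, $D/M$ is filtered by the modules $M_{i+1}/M_i\in \Acal_S$, so $D/M\in \Acal_S$ by Eklof's lemma, yielding the required special preenvelope.

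The main obstacle is the cardinality control. The naive Eklof--Trlifaj construction, which adjoins an extension per element of $\Ext_R^1(R/sR,M_i)=M_i/sM_i$, only yields the bound $|R|\cdot\kappa$. The crucial economy is that if $sy=g$ in some extension then $s(ry)=rg$ for every $r\in R$, so making a generating set $s$-divisible automatically makes all $R$-linear combinations $s$-divisible; this permits adjoining only $\kappa$ witnesses per step, keeping the entire construction $\kappa$-generated.
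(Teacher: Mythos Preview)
Your argument for the equivalence is correct and essentially fleshes out the paper's one-line justification: both rest on $A\in\Acal_S\subseteq\Acal_n$ (using $\Pcal_1^{<\aleph_0}\subseteq\Pcal_n^{<\aleph_0}$) together with closure of $\Acal_n$ under extensions for the forward direction and the hereditary property of $(\Acal_n,\Tcal_n)$ for the backward direction.

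For the $\kappa$-generated preenvelope, your $\omega$-step small-object-argument construction is correct but takes a genuinely different route from the paper. The paper proceeds in a single pushout: it uses the explicit special $\Dcal_S$-preenvelope $0\to R\to R[S^{-1}]\to R[S^{-1}]/R\to 0$ already recorded in \cref{ss:regmultsubsets}, takes its $\kappa$-fold direct sum, and pushes out along a free presentation $R^{(\kappa)}\twoheadrightarrow M$. The resulting $D$ is $S$-divisible as a quotient of $R[S^{-1}]^{(\kappa)}$, and $D/M\cong (R[S^{-1}]/R)^{(\kappa)}$ is $\kappa$-generated because $R[S^{-1}]/R$ is countably generated when $S$ is. Your approach is more self-contained, never invoking the localisation $R[S^{-1}]$ or its tilting-module structure, at the cost of an iterative limit and the auxiliary observation that making a generating set $s$-divisible suffices. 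The paper's approach is shorter precisely because the relevant facts about $R[S^{-1}]$ have already been set up in the preliminaries; yours would transplant more readily to settings where no such explicit preenvelope of $R$ is available.
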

\begin{proof}
The first statement holds since $A \in \Acal_S$, and every module in $\Acal_S$ is a direct summand in a module filtered by modules in $\Pcal_1^{<\aleph_0}$, see \cref{ss:regmultsubsets}. 

For the second statement, take a free presentation of $M$, $R^{(\kappa)} \overset{\phi}\to M \to 0$  the special $\Dcal_S$-envelope of $R^{(\kappa)}$, $\psi$, and we form the pushout of $\psi$ and $\phi$, which we denote $D$.  
\[
\begin{tikzcd}
0 \arrow[r] & R^{(\kappa)} \arrow[r, "\psi"] \arrow[d, two heads,  "\phi"] & R[S^{-1}]^{(\kappa)} \arrow[d, two heads] \arrow[r] & \CustomBig(\quotmod{R[S^{-1}]}{R}\CustomBig)^{(\kappa)} \arrow[d, equal] \arrow[r] & 0 \\
0 \arrow[r] & M \arrow[r, "\mu"] &  D \arrow[r] &   \CustomBig(\quotmod{R[S^{-1}]}{R}\CustomBig)^{(\kappa)} \arrow[r] & 0
\end{tikzcd}
\]
Then $\mu \colon M \to D$ is a special $\Dcal_S$-preenvelope. As both $M$ and $\CustomBig(\quotmod{R[S^{-1}]}{R}\CustomBig)^{(\kappa)}$ are at most $\kappa$-generated and $\quotmod{R[S^{-1}]}{R}$ is countably generated since $S$ is countably generated, see \cref{ss:multsubsets}, also $D$ is at most $\kappa$-generated.
\end{proof}

\begin{lemma}\label{indstepSk}
Let $R$ be a commutative noetherian ring and $S$ a countable multiplicative set consisting of regular elements of $R$. Take $M \in \Pcal_k(R)$ to be an $S$-divisible module. 
\begin{itemize}
\item[(i)] $M[s] \in \Pcal_{k-1}(\quotmod{R}{sR})$ for each $s \in S$,
\item[(ii)] $\Gamma_S(M)$ is filtered by modules in $\bigcup_{s \in S}\Pcal_{k-1}(\quotmod{R}{sR})$.
\end{itemize}

\end{lemma}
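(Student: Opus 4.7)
For part (i), the plan is to translate a length-$k$ projective resolution of $M$ over $R$ into a length-$(k-1)$ one of $M[s]$ over $\bar R := \quotmod{R}{sR}$ via an elementary Tor/splicing argument, using the fact that $\pd_R \bar R \leq 1$. Since $s$ is regular, $0 \to R \xrightarrow{\cdot s} R \to \bar R \to 0$ is a projective resolution of $\bar R$; applying $M \otimes_R -$ together with the $S$-divisibility hypothesis $sM = M$ yields $\Tor_0^R(M, \bar R) = M/sM = 0$, $\Tor_1^R(M, \bar R) \cong M[s]$, and $\Tor_i^R(M, \bar R) = 0$ for $i \geq 2$. I would then pick any length-$k$ projective resolution $0 \to P_k \to \cdots \to P_0 \to M \to 0$ over $R$ and reduce modulo $s$. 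The resulting complex $\bar P_\bullet := P_\bullet \otimes_R \bar R$ consists of projective $\bar R$-modules, has length $k$, and by the Tor computation its homology is concentrated in degree $1$, where it equals $M[s]$. The key observation is that $d_1 : \bar P_1 \to \bar P_0$ is surjective (since $H_0 = 0$), so $0 \to Z_1 \to \bar P_1 \to \bar P_0 \to 0$ splits and $Z_1 := \ker d_1$ is itself a projective $\bar R$-module. Splicing the remaining exact part gives a length-$(k-2)$ projective resolution of $W := \operatorname{im} d_2 \subseteq Z_1$, and the short exact sequence $0 \to W \to Z_1 \to M[s] \to 0$ then yields $\pd_{\bar R} M[s] \leq k - 1$.

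For part (ii), since $S$ is countable I would enumerate $S = \{s_1, s_2, \ldots\}$, set $t_0 := 1$ and $t_n := s_1 s_2 \cdots s_n$, and consider the ascending chain $\{M[t_n]\}_{n\geq 0}$, which is continuous and whose union is $\Gamma_S(M)$ (every $s \in S$ divides some $t_n$). To identify the successor quotients, I would use that multiplication by $t_n$ defines a map $M[t_{n+1}] \to M[s_{n+1}]$ with kernel $M[t_n]$; $S$-divisibility makes this map surjective, since given $x \in M[s_{n+1}]$ one writes $x = t_n m$ using $t_n M = M$, and then $t_{n+1} m = s_{n+1} x = 0$ shows $m \in M[t_{n+1}]$. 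Hence $M[t_{n+1}]/M[t_n] \cong M[s_{n+1}]$, which by (i) belongs to $\Pcal_{k-1}(\quotmod{R}{s_{n+1}R}) \subseteq \bigcup_{s \in S}\Pcal_{k-1}(\quotmod{R}{sR})$, exhibiting the required filtration of $\Gamma_S(M)$.

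Neither step is technically deep, but the crucial input in (i) is that $S$-divisibility forces $H_0(\bar P_\bullet) = 0$, which both kills one homological degree and, more subtly, makes $Z_1$ genuinely projective via the splitting rather than merely of low projective dimension --- without this one would only get $\pd_{\bar R} M[s] \leq k$. In (ii) the main identification is that of the filtration quotients with the $s_{n+1}$-socles, which again relies on divisibility to lift preimages. The edge case $k = 0$ of (i) is handled by noting that a projective $S$-divisible module has $M[s] = 0$, since $s$ is $R$-regular and $M$ is flat.
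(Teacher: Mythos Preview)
Your proof is correct and follows essentially the same approach as the paper's. In (i) both arguments tensor a length-$k$ projective resolution with $R/sR$, use $\pd_R(R/sR)\leq 1$ together with $S$-divisibility to see the resulting complex has homology only $M[s]$ in degree~$1$, and exploit the splitting off of $\bar P_0$; the paper phrases this via the syzygy $\Omega_1(M)\otimes_R R/sR$ while you work with the cycle module $Z_1$, but these are the same object up to the splitting. In (ii) you build the filtration $\{M[t_n]\}$ of $\Gamma_S(M)$ and identify the quotients by hand, whereas the paper obtains the same filtration by applying $\Tor_1^R(M,-)$ to the filtration of $R[S^{-1}]/R$ by the $R/t_iR$; since $M[t]\cong\Tor_1^R(M,R/tR)$ for regular $t$, the two arguments are literally the same computation in different notation.
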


\begin{proof}
Let $M \in \Pcal_k$ be an $S$-divisible module and $P^\bullet \to M \to 0$  and 
a projective presentation of $M$ of length $k$. We fix the first syzygy $\Omega_1(M) = \mathsf{Im}(P_1 \to P_0)$ with respect to this presentation.

(i) Fix $s \in S$. Apply the functor $(- \otimes_R \quotmod{R}{sR})$ to $P^\bullet \to M \to 0$. As $\pd_R(\quotmod{R}{sR}) \leq 1$, $\Tor_i^R(M, \quotmod{R}{sR}) = 0$ for all $i>1$, and $\Tor_1^R(\Omega_1(M), \quotmod{R}{sR})=0$ as $\Omega_1(M)$ is $S$-torsion-free, being a submodule of an $R$-free module. Additionally, by the assumption that $M$ is $S$-divisible, $M \otimes_R \quotmod{R}{sR}=0$. Therefore we find the following two exact sequences:

\[
\begin{tikzcd}[cramped, sep=small]
0 \arrow[r] & P_k \otimes_R \quotmod{R}{sR} \arrow[r] &  \cdots \arrow[r] &   P_1 \otimes_R \quotmod{R}{sR} \arrow[r] &\Omega_1(M) \otimes_R \quotmod{R}{sR} \arrow[r] &0,
\end{tikzcd}\\
\]
\[
\begin{tikzcd}[cramped, sep=small]
0 \arrow[r] & \Tor_1^R(M, \quotmod{R}{sR}) \ar[r]& \Omega_1(M) \otimes_R \quotmod{R}{sR} \arrow[r] & P_0 \otimes_R \quotmod{R}{sR} \arrow[r] &  0,
\end{tikzcd}
\]
which are also exact sequences as $\quotmod{R}{sR}$-modules. 

The first exact sequence is a $\Pcal_0\big(\quotmod{R}{sR}\big)$-presentation of $\Omega_1(M) \otimes_R \quotmod{R}{sR}$, so $\Omega_1(M) \otimes_R \quotmod{R}{sR} \in \Pcal_{k-1}(\quotmod{R}{sR})$. 
Additionally, the second exact sequence splits as $P_0 \otimes_R \quotmod{R}{sR}$ is projective in $\Mod \quotmod{R}{sR}$, therefore also $M[s] \cong \Tor_1^R(M, \quotmod{R}{sR}) \in \Pcal_{k-1}(\quotmod{R}{sR})$.

(ii) Fix an enumeration $S = \{s_i \mid i \in \Nbb\}$. By the discussion in \cref{ss:regmultsubsets}, $\quotmod{R[S^{-1}]}{R}$ is the union of the continuous filtration 
$\begin{tikzcd}
\big\{\quotmod{R}{t_iR} \arrow[r, "(-)\cdot s_{i+1}"] & \quotmod{R}{t_{i+1}R}\big\}_{s_j \in S}
\end{tikzcd}$
 and 

\begin{equation}\label{eq:torsionses}
\begin{tikzcd}
0 \arrow[r] &\quotmod{R}{t_iR} \arrow[r, "(-)\cdot s_{i+1}"] &   \quotmod{R}{t_{i+1}R} \arrow[r] & \quotmod{R}{s_{i+1}R} \arrow[r] &0
\end{tikzcd}
\end{equation}
is a short exact sequence. 

We apply $\Tor_1^R(M,-)$ to this filtration, which we claim yields another filtration of $\Tor_1^R(M, \quotmod{R[S^{-1}]}{R})$. Explicitly, applying $(M \otimes_R-)$ to \cref{eq:torsionses}, and noting that $\Tor_2^R(M, \quotmod{R}{s_iR})=0=M \otimes_R \quotmod{R}{t_iR}$ as $\pd_R \quotmod{R}{s_iR} \leq 1$ and $M$ is $S$-divisible, we find the short exact sequences $0 \to \Tor_1^R(M,\quotmod{R}{t_iR}) \overset{s_{i+1}}\to \Tor_1^R(M,\quotmod{R}{t_{i+1}R}) \to \Tor_1^R(M,\quotmod{R}{s_iR}) \to 0$. Thus the monomorphisms $\{\Tor_1^R(M,\quotmod{R}{t_iR}) \overset{s_{i+1}}\to \Tor_1^R(M,\quotmod{R}{t_{i+1}R})\}$ form a continuous chain of submodules of $\Tor_1^R(M,\quotmod{R[S^{-1}]}{R})$, and the cokernels are exactly $\Tor_1^R(M,\quotmod{R}{s_iR}) \cong M[s_i]$ which are in $\Pcal_{k-1}(\quotmod{R}{sR})$ by (i). 
As $\Gamma_S(M) \cong \Tor_1^R(M, \quotmod{R[S^{-1}]}{R})$, the conclusion follows. 
\end{proof}

\begin{prop}\label{Skimpliesft}
Let $R$ be a commutative noetherian ring that satisfies $(S_k)$ and let $M$ be a countably generated module in $\Pcal_k$. Then $M \in \Acal_k$, so $(\Pcal_k,\Pcal_k\Perp{})$ is of finite type.
\end{prop}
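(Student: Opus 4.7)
The plan is to induct on $k$. The ``in particular'' clause follows from the main one: any $M \in \Pcal_k$ is $\Pcal_k^{\leq \aleph_0}$-filtered by Raynaud--Gruson (\cref{ss:finitetype}), each factor lies in $\Acal_k$ by the proposition, and $\Acal_k$ is closed under transfinite extensions, yielding $\Pcal_k = \Acal_k$. The base case $k=0$ is immediate: any countably generated projective module is a summand of $R^{(\aleph_0)} \in \Filt(\{R\})$ and hence lies in $\Acal_0$.

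For the inductive step, assume the statement for $k-1$ and let $M$ be countably generated with $\pd_R M \leq k$. If $M$ is finitely generated, then noetherianity gives $M \in \Pcal_k^{<\aleph_0} \subseteq \Acal_k$, so I may assume $M$ is infinitely generated and, again by noetherianity, strongly countably presented. Since $(S_k)$ forces $(S_1)$ for $k \geq 1$, the total ring of quotients $Q = R[\Sigma^{-1}]$ is artinian, so $\Findim Q = \Kdim Q = 0$ by \cref{ss:RG} and $M \otimes_R Q$ is $Q$-projective. Applying \cref{countsubset} with $T = \Sigma$ produces a countable multiplicative subset $S \subseteq \Sigma$ of regular elements with $M[S^{-1}]$ projective over $R[S^{-1}]$. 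Next, \cref{suffdiv} supplies a countably generated special $\Dcal_S$-preenvelope $0 \to M \to D \to A \to 0$ with $A \in \Acal_S \subseteq \Pcal_1$ (so $D \in \Pcal_k$), and the same lemma reduces the task to showing $D \in \Acal_k$.

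Decompose $D$ via the short exact sequence $0 \to \Gamma_S(D) \to D \to D[S^{-1}] \to 0$ and localise the preenvelope at $S$. The term $A[S^{-1}]$ is $\{R[S^{-1}]\}$-filtered (the $\quotmod{R}{sR}$-factors die after inverting $S$) and hence projective over $R[S^{-1}]$, so together with the projectivity of $M[S^{-1}]$ we conclude that $D[S^{-1}]$ is projective over $R[S^{-1}]$, thus a summand of a free $R[S^{-1}]$-module. Since $R[S^{-1}] \in \Acal_S \subseteq \Acal_k(R)$ and $\Acal_k(R)$ is closed under coproducts and summands, $D[S^{-1}] \in \Acal_k(R)$. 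For the torsion piece, \cref{indstepSk}(ii) presents $\Gamma_S(D)$ as filtered by the $s$-socles $D[s]$ ($s \in S$), each in $\Pcal_{k-1}(\quotmod{R}{sR})$ by \cref{indstepSk}(i), countably generated (as submodules of $D$ over noetherian $R$), and over a ring satisfying $(S_{k-1})$ by \cref{serrequotientring}(i). The induction hypothesis yields $D[s] \in \Acal_{k-1}(\quotmod{R}{sR})$. The containment $\Pcal_{k-1}^{<\aleph_0}(\quotmod{R}{sR}) \subseteq \Pcal_k^{<\aleph_0}(R)$, using $\pd_R \quotmod{R}{sR} = 1$ and the existence of finitely generated projective resolutions over the noetherian $R$, upgrades this to $D[s] \in \Acal_k(R)$. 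Closure of $\Acal_k$ under filtrations then puts $\Gamma_S(D) \in \Acal_k$, closure under extensions gives $D \in \Acal_k$, and \cref{suffdiv} returns $M \in \Acal_k$.

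The main obstacle is orchestrating the reductions: Serre's $(S_1)$ is precisely what enables finding a countable $S$ of regular elements turning $M$ into an $S$-divisible $D$, and passing from $R$ to $\quotmod{R}{sR}$ drops both the Serre index and the projective dimension by one, so that induction can fire on the torsion part supplied by \cref{indstepSk}. The remaining bookkeeping --- closure of $\Acal_k$ under the relevant operations and the change-of-rings inclusion $\Acal_{k-1}(\quotmod{R}{sR}) \subseteq \Acal_k(R)$ --- is routine once the setup is in place.
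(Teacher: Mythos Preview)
Your proof is correct and follows essentially the same inductive strategy as the paper: reduce to an $S$-divisible module for a countable regular multiplicative set $S$ found via \cref{countsubset}, split it as torsion-free localisation plus $S$-torsion via the sequence $0 \to \Gamma_S(D) \to D \to D[S^{-1}] \to 0$, handle the localisation as a projective $R[S^{-1}]$-module in $\Acal_S$, and handle $\Gamma_S(D)$ via \cref{indstepSk}, \cref{serrequotientring}, induction, and change of rings. The only cosmetic difference is that the paper simply replaces $M$ by its preenvelope $D$ and asserts one ``can assume'' $M[S^{-1}]$ is projective, whereas you keep $M$ and $D$ separate and explicitly verify $D[S^{-1}]$ is projective by localising the preenvelope sequence (using that $A[S^{-1}]$ is projective, since $A$ is a summand of an $\{R,\quotmod{R}{sR}\}$-filtered module and the $\quotmod{R}{sR}$-factors die); this makes the reduction more transparent but is the same argument.
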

\begin{proof}
The proof will be by induction on $k$. For $k =0$, $(S_0)$ always holds and $\Pcal_0$ is always of finite type over any ring so there is nothing to prove. Note that the assertion about $M$ is sufficient to imply the finite type of $(\Pcal_k,\Pcal_k\Perp{})$, see \cref{ss:finitetype}.

Take $M$ as in the statement of the proposition, and suppose that the statement holds for $k-1$. If $(S_k)$ holds and $k>0$, then in particular the classical ring of quotients $Q$ is artinian, and so $M \otimes_RQ$ is a projective $Q$-module because $\Findim(Q)=0$. Therefore, by \cref{countsubset} there exists a countable multiplicative subset $S$ consisting of regular elements such that $M\otimes_RR[S^{-1}]$ is a projective $R[S^{-1}]$-module. 

By \cref{suffdiv}, we can assume that $M$ is a countably generated $S$-divisible module in $\Pcal_k$, and such that $M\otimes_RR[S^{-1}]$ is a projective $R[S^{-1}]$-module. Consider the following exact sequence, where $M\otimes_R \quotmod{R[S^{-1}]}{R}=0 $ by the $S$-divisiblity of $M$.

\begin{equation}\label{eq:torsionP}
\begin{tikzcd}
0 \arrow[r] &\Gamma_S(M) \arrow[r] &   M \arrow[r] &M\otimes_RR[S^{-1}] \arrow[r] &0
\end{tikzcd}
\end{equation}

As $M\otimes_RR[S^{-1}] \in \Acal_1$, it is a projective $R[S^{-1}]$-module. So, to conclude that $M \in \Acal_k$, it is sufficient to show that $\Gamma_S(M) \in \Acal_k$.

By \cref{indstepSk}, $\Gamma_S(M)$ is filtered by modules in $\bigcup_{s \in S}\Pcal_{k-1}(\quotmod{R}{sR})$. Furthermore, by \cref{serrequotientring} and the induction hypothesis, $\Pcal_{k-1}(\quotmod{R}{sR})$ is of finite type in $\Mod \quotmod{R}{sR}$, and is therefore a direct summand in a $\Pcal_{k-1}^{<\aleph_0}(\quotmod{R}{sR})$-filtered $\quotmod{R}{sR}$-module. By the first change of rings theorem (see \cite[Theorem 4.3.3]{Wei94}), $\Pcal_{k-1}^{<\aleph_0}(\quotmod{R}{sR})\subseteq\Pcal^{<\aleph_0}_k(R)$ for every $s \in S$, so we conclude that $M[s] \in \Acal_k$ and therefore also $\Gamma_S(M) \in \Acal_k$, as required.

\end{proof}

We now prove the converse implication.

\begin{prop}\label{ftimpliesSk}
Let $R$ be a commutative noetherian ring such that $(\Pcal_k,\Pcal_k\Perp{})$ is of finite type for some natural number $k$. Then Serre's condition $(S_k)$ holds.
\end{prop}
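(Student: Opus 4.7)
The plan is to prove the contrapositive. Suppose Serre's condition $(S_k)$ fails, so there exists a prime $\pp \in \Spec R$ with $\depth(R_\pp) < \min\{\height(\pp), k\}$, giving both $\depth R_\pp < k$ and $\depth R_\pp < \Kdim R_\pp$. The strategy is to reduce to the local ring at $\pp$, where the Auslander--Buchsbaum formula combined with the Bass/Raynaud--Gruson identity $\Findim = \Kdim$ yield a clean contradiction to the finite type hypothesis.

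First, I would establish that finite type of $(\Pcal_k, \Pcal_k\Perp{})$ passes to the localization at $\pp$. Concretely, if $\Scal \subseteq \Pcal_k^{<\aleph_0}(R)$ generates the cotorsion pair over $R$, then $\{S \otimes_R R_\pp : S \in \Scal\}$ should generate $(\Pcal_k(R_\pp), \Pcal_k(R_\pp)\Perp{})$ over $R_\pp$. The nontrivial direction exploits flat base change for Ext, namely $\Ext_R^i(X, N) \cong \Ext_{R_\pp}^i(X \otimes R_\pp, N)$ for any $R_\pp$-module $N$ and $R$-module $X$, together with a descent argument in the spirit of \cref{countsubset}: a strongly countably presented $R_\pp$-module of projective dimension $\leq k$ admits a free resolution whose matrix entries can be cleared to lie in some localization $R_s$ for $s \in R \setminus \pp$. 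Since any $R_\pp$-module $N$ is automatically $s$-divisible, $\Ext_R^i(R_s, N)=0$ for $i\geq 1$, so the $R_s$-free modules in such a resolution become acyclic for $\Hom_R(-,N)$, allowing one to transport Ext-vanishing between $R_\pp$ and $R$ and conclude that $N \in \Pcal_k(R_\pp)\Perp{}$.

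With the localization step in hand, we may assume $R$ is local with $\depth R < k$ and $\depth R < \Kdim R$. By the Auslander--Buchsbaum formula (\cref{ss:RG}) we have $\findim R = \depth R$, while Bass and Raynaud--Gruson (\cref{ss:RG}) give $\Findim R = \Kdim R$. Hence $\findim R < \min\{k, \Findim R\}$, so there exists an $R$-module of projective dimension in the range $(\findim R, \Findim R]$; iterating syzygies produces $M$ with $\pd_R M = \findim R + 1 \leq k$, giving $M \in \Pcal_k \setminus \Pcal_{\findim R}$. On the other hand, if $(\Pcal_k, \Pcal_k\Perp{})$ were of finite type then, by \cref{ss:compcotpairs}, $\Pcal_k = \Filt(\Pcal_k^{<\aleph_0})^\oplus$; since $\Pcal_k^{<\aleph_0} \subseteq \Pcal_{\findim R}$ and $\Pcal_{\findim R}$ is closed under transfinite extensions (via Eklof's lemma, as $\Pcal_n$ is characterized by the vanishing of $\Ext^{n+1}(-, ?)$) and under direct summands, we would obtain $\Pcal_k \subseteq \Pcal_{\findim R}$, contradicting the existence of $M$.

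The main obstacle is the localization step. A module of projective dimension $\leq k$ over $R_\pp$ can have arbitrary projective dimension as an $R$-module, so one cannot naively invoke finite type over $R$ for modules arising over $R_\pp$. The workaround above --- descending to $R_s$-free resolutions and using $s$-divisibility of $R_\pp$-modules to make them acyclic over $R$ --- is technically the heart of the argument; the remaining steps are commutative-algebraic reductions which are essentially formal.
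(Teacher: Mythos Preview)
Your overall strategy---prove the contrapositive, pass to a local situation where $\findim = \depth < \Findim = \Kdim$, and exhibit a module in $\Pcal_k$ of projective dimension exceeding $\findim$---matches the paper's in spirit, and the purely local part of your argument is correct.

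The gap is in the localization step. You want finite type of $(\Pcal_k,\Pcal_k\Perp{})$ to pass from $R$ to $R_\pp$, but your descent argument loses one degree of projective dimension. Concretely: given a countably presented $X\in\Pcal_k(R_\pp)$, you descend its free resolution to some $X''$ over $R[S^{-1}]$ with $S$ countable and $\pd_{R[S^{-1}]}X''\le k$. The $R[S^{-1}]$-free resolution of $X''$ is indeed $\Hom_R(-,N)$-acyclic for any $R_\pp$-module $N$, and this does yield $\Ext^i_R(X'',N)\cong\Ext^i_{R_\pp}(X,N)$. But to conclude vanishing from $N\in\Pcal_k(R)\Perp{}$ you would need $X''\in\Pcal_k(R)$, whereas each free $R[S^{-1}]$-module has $\pd_R\le 1$, so you only obtain $\pd_R X''\le k+1$. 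Your argument therefore gives $N\in\Pcal_{k-1}(R_\pp)\Perp{}$, not $N\in\Pcal_k(R_\pp)\Perp{}$. The paper's own remark after \cref{main-A} flags exactly this: locality of the finite type of $\Pcal_k$ is a \emph{consequence} of the theorem, not an input, and your proposal is essentially attempting to prove that harder statement directly.

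The paper avoids the issue by never localizing the finite-type property. Instead it uses Bass's construction to produce, over $R$ itself, a module $M[S^{-1}]$ with $\pd_R M[S^{-1}]\le n\le k$ whose localization at $\pp$ has $\pd_{R_\pp}$ equal to $n>\depth R_\pp$. The key is that Bass's module is $R_\pp[s^{-1}]/\mathbf a R_\pp[s^{-1}]$ for a regular sequence $\mathbf a$ of length $n-1$ in $R_\pp[s^{-1}]$, so its projective dimension over the \emph{further} localization $R_\pp[s^{-1}]$ is only $n-1$; descending via \cref{countsubset} to a countable $S\subseteq R\setminus\pp$ gives $\pd_{R[s^{-1},S^{-1}]}M[S^{-1}]\le n-1$, whence $\pd_R M[S^{-1}]\le n$ with no loss. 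Finite type over $R$ then forces $M[S^{-1}]\in\Acal_k(R)$, and only at this point does one localize---the \emph{module}, not the property---landing in $\Acal_k(R_\pp)\subseteq\Pcal_{\depth R_\pp}(R_\pp)$, a contradiction. The extra inversion of $s$ in Bass's construction is precisely what absorbs the $+1$ that blocks your generic descent.
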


\begin{proof}
We prove the contrapositive using a careful adaptation of a construction of Bass, \cite[Proposition 5.2]{B62}. Suppose $(S_k)$ does not hold, that is, there exists a prime ideal $\pp$ of $R$ such that $\depth(R_\pp) < \min\{\height(\pp), k\}$, and suppose for the contradiction that $\Pcal_k$ is of finite type.

Take $n\vcentcolon= \min\{\height(\pp), k\}$. By \cite[Proposition 5.2]{B62}, there exists a prime $\qq\subsetneq \pp$, a sequence $\mathbf{a}\vcentcolon=(a_1, \dots, a_{n-1})$ of elements in $\qq$, and $s \in R$ such that $\qq \subsetneq (\qq, s) \subsetneq \pp$, and $\mathbf{a}$ is a regular sequence in $R_\pp[s^{-1}]$. By \cite[Proposition 5.4]{B62}, we have $\pd_{R_\pp} \CustomBig(\quotmod{R_\pp[s^{-1}]}{\mathbf{a}R_\pp[s^{-1}]}\CustomBig) =n$. Put $T = R \setminus \pp$, and $M = \quotmod{R[s^{-1}]}{\mathbf{a}R[s^{-1}]}$. Since $\mathbf{a}$ is a regular sequence in $R_\pp[s^{-1}]=R[s^{-1},T^{-1}]$, we have $\pd_{R[s^{-1},T^{-1}]}M[T^{-1}] \leq n-1$. Applying \cref{countsubset} to the $(n-1)$-th syzygy $R[s^{-1}]$-module $\Omega_{n-1}(M)$ of $M$, we find a countable multiplicative subset $S$ of $T$ such that $\pd_{R[s^{-1},S^{-1}]}M[S^{-1}] \leq n-1$. Since $S$ is countable, $\pd_R R[s^{-1},S^{-1}] \leq 1$, and therefore $\pd_R M[S^{-1}] \leq n$.

By the assumption that $\Pcal_k(R)$ is of finite type and because $n \leq k$,  $M[S^{-1}] \in \Acal_k(R)$, and therefore also $M[S^{-1}] \otimes_RR_\pp \cong \quotmod{R_\pp[s^{-1}]}{\mathbf{a}R_\pp[s^{-1}]} \in \Acal_k(R_\pp) $.  However, since $\depth(R_\pp) = \findim(R_\pp)$, we have $\Acal_k(R_\pp)\subseteq \Acal_{\depth (R_\pp)}(R_\pp)$, and so $\pd_{R_\pp} \CustomBig(\quotmod{R_\pp[s^{-1}]}{\mathbf{a}R_\pp[s^{-1}]}\CustomBig) \leq \depth (R_\pp)< n$, a contradiction. 
\end{proof}

\begin{proof}[Proof of \cref{main-A}]
Follows by combining \cref{Skimpliesft} and \cref{ftimpliesSk}. The second sentence follows immediately from the fact that $R$ is Cohen--Macaulay if and only if it satisfies $(S_k)$ for all $k \geq 0$.
\end{proof}

\begin{rmk}
While Serre's condition $(S_k)$ is clearly a local property (i.e., it can be checked on the stalk rings $R_\pp$, $\pp \in \Spec R$), it is not straightforward to see that the finite type of $\Pcal_k$ is a local property. Similarly, while $(S_{k+1})$ clearly implies $(S_k)$, it is not straightforward to see that the finite type of $\Pcal_{k+1}$ implies the finite type of $\Pcal_k$. However, \cref{main-A} implies both these features for a commutative noetherian ring for any $k>0$.
\end{rmk}

\section{A Govorov-Lazard-type theorem for $\Fcal_k$}\label{s:lazard}
In this section, we prove \cref{main-B}.

The classical Govorov-Lazard theorem states that every flat module is a direct limit of finitely generated free modules, in symbols: $\Fcal_0 = \varinjlim \Pcal_0^{< \aleph_0}$. In this section we investigate a higher version of this theorem, that is, when every module of flat dimension at most $k$ is a direct limit of strongly finitely presented modules of projective dimension at most $k$. In symbols, we ask when the following equality holds for some $k>0$: $\Fcal_k = \varinjlim \Pcal_k^{< \aleph_0}$. 

We note that this differs from the condition that the modules of flat dimension at most $k$ are the direct limit closure of the modules of projective dimension at most $k$, $\Fcal_k = \varinjlim \Pcal_k$, which is slightly weaker. In fact, it is not even known if $\varinjlim \Pcal_k $ is closed under direct limits, while $\varinjlim \Pcal_k^{< \aleph_0}$ always is, see \cref{ss:injlim}. 

However, things simplify for $k=1$. In the case $k=1$, over any ring $\varinjlim \Pcal_1 = \varinjlim \Pcal_1^{< \aleph_0}$ holds, which is a direct consequence of the fact that $ \Pcal_1 \subseteq \varinjlim \Pcal_1^{< \aleph_0}$. Thus, the Govorov-Lazard theorem was shown to hold over commutative domains for the case $k=1$ in the paper \cite{AHT04}. The following is an example of a commutative noetherian ring where $\Fcal_k \supsetneq \varinjlim \Pcal_k \supsetneq  \varinjlim \Pcal_k^{< \aleph_0}$.

\begin{ex}
Consider the ring $\quotmod{K[x,y,z,w]}{\langle x^2, xy, xz, xw \rangle}$ over a field $K$ with maximal ideal $ \mm \vcentcolon =\langle \bar x, \bar y, \bar z, \bar w \rangle$, and let $R $ be the localisation of this ring at $\mm$. Then the dimension of $R$ is 3, and it has depth 0. Therefore, $\Pcal_1 \subseteq \varinjlim \Pcal_1^{< \aleph_0} = \varinjlim \Pcal_0^{< \aleph_0}= \Fcal_0$ and it follows that also $\Pcal_2 \subseteq \Fcal_1$.  In particular, $\varinjlim \Pcal_2^{< \aleph_0} =  \varinjlim \Pcal_0^{< \aleph_0}= \Fcal_0$ and $ \varinjlim \Pcal_2 \subseteq\Fcal_1$.

If additionally $K$ is countable, $\Fcal_i \subseteq \Pcal_{i+1}$  for $i = 0,1,2$ by \cite[Corollaire (3.3.2)]{RG71}. In particular $\Fcal_2 \supsetneq \Fcal_1= \varinjlim \Pcal_2 \supsetneq  \varinjlim \Pcal_k^{< \aleph_0} = \Fcal_0$ for all $k \geq 0$, since in particular $\Findim(R) = 3$ and $\WFindim(R)=2$.
\end{ex}

\begin{lemma}\label{C2FdimQ}
Let $R$ be a commutative noetherian ring with the property $(C_{2})$. If $Q$ denotes the classical ring of quotients of $R$, then $\Findim(Q) \leq1$ and $\WFindim(Q) =0$.
\end{lemma}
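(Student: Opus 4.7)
The plan is to reduce everything to understanding $\Spec Q$. Since $Q = R[\Sigma^{-1}]$ with $\Sigma$ the multiplicative set of regular elements, primes of $Q$ correspond bijectively with primes $\pp$ of $R$ disjoint from $\Sigma$, i.e., primes consisting of zero-divisors. Because $R$ is noetherian, $\Ass(R)$ is finite, so prime avoidance gives that every such $\pp$ sits inside some associated prime $\pp' \in \Ass(R)$.

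Next I would extract a height bound from $(C_2)$. For any $\pp' \in \Ass(R)$ we have $\depth(R_{\pp'}) = 0$, and $(C_2)$ forces $\min\{\height(\pp'),2\} - 1 \leq 0$, hence $\height(\pp') \leq 1$. Combined with the previous observation, every prime of $Q$ corresponds to a prime of $R$ of height at most one, so $\Kdim(Q) \leq 1$. Since $Q$ is noetherian, the Bass--Raynaud-Gruson identity $\Findim = \Kdim$ recalled in \cref{ss:RG} immediately yields $\Findim(Q) \leq 1$.

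For $\WFindim(Q)$ I would use the formula $\WFindim(Q) = \sup\{\depth(Q_\qq) \mid \qq \in \Spec Q\}$ from \cref{ss:RG}, together with the identification $Q_\qq = R_\pp$ for the corresponding zero-divisor prime $\pp$. If $\height(\pp) = 0$, then $R_\pp$ is Artinian local, so $\depth(R_\pp) = 0$. If $\height(\pp) = 1$, pick $\pp' \in \Ass(R)$ with $\pp \subseteq \pp'$; then any chain $\pp_0 \subsetneq \pp \subseteq \pp'$ with $\pp_0$ a minimal prime under $\pp$ forces $\pp = \pp'$ (otherwise $\height(\pp') \geq 2$, contradicting the bound just established), so $\pp$ is itself associated and $\depth(R_\pp) = 0$ again. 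Hence $\WFindim(Q) = 0$.

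The only substantive input is the height-one bound on associated primes derived from $(C_2)$; everything else is routine prime-spectrum bookkeeping, so I do not anticipate a serious obstacle.
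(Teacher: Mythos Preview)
Your argument is correct and, for the first assertion $\Findim(Q)\le 1$, coincides with the paper's: both bound the heights of associated primes via $(C_2)$ and conclude $\Kdim(Q)\le 1$. The difference lies in how you obtain $\WFindim(Q)=0$. The paper deduces it from $\Findim(Q)\le 1$ together with $\Fcal_1(Q)=\Pcal_1(Q)$ by invoking an external result \cite[Corollary~6.8]{BH09}. You instead use only the Bass formula $\WFindim(Q)=\sup_{\qq}\depth(Q_\qq)$ already recorded in \cref{ss:RG}, together with the elementary observation that every zero-divisor prime $\pp$ of height~$1$ must actually be an associated prime (else the containing associated prime would have height~$\ge 2$), forcing $\depth(R_\pp)=0$ in all cases. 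Your route is more self-contained and avoids the external citation; the paper's is shorter once one accepts the Bazzoni--Herbera input.
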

\begin{proof}
It is enough to see that every associated prime of $R$, and thus of $Q$, has height at most $1$. It then follows that $\Kdim(Q)=1$ as every proper ideal of $Q$ is contained in an associated prime of $Q$ as all non-unit elements of $Q$ are zero-divisors. 

Take an associated prime $\pp$ of $R$. Then $\depth (R_\pp)=0$, and by the assumption $(C_2)$, we have the following.
\[
0= \depth(R_\pp) \geq \min\{\height(\pp)-1, 2\}
\]
Therefore, $\height(\pp)\leq 1$, as required. 

As $\Findim(Q)\leq 1$, $\Fcal_1(Q) = \Pcal_1(Q)$ by \cref{ss:RG}, we conclude that $\WFindim(Q) =0$ from \cite[Corollary 6.8]{BH09}.
\end{proof}

The following proofs follow a similar idea to the proof of \cref{indstepSk} with some adjustments.

\begin{lemma}\label{indstepCk}
Let $R$ be a commutative noetherian ring and let $S$ be a multiplicative subset consisting of regular elements of $R$. Take $M \in \Fcal_k(R)$ to be an $S$-divisible module for $k>0$. 
\begin{itemize}
\item[(i)] $M[s] \in \Fcal_{k-1}(\quotmod{R}{sR})$ for each $s \in S$
\item[(ii)] $\Gamma_S(M)$ is a direct limit of modules of the form $M[s], s \in S$.
\end{itemize}
\end{lemma}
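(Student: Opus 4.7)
The approach is to mirror the proof of Lemma~\ref{indstepSk}, replacing the projective resolution with a flat one; this is precisely what will allow us to drop the countability hypothesis on $S$ and to pass from $\Pcal_k$ to $\Fcal_k$. Throughout, I will exploit the fact that for a regular element $s$ the sequence $0 \to R \xrightarrow{\cdot s} R \to R/sR \to 0$ gives $\pd_R(R/sR) \leq 1$, $\Tor_i^R(-, R/sR) = 0$ for $i \geq 2$, and $\Tor_1^R(N, R/sR) \cong N[s]$ for every $R$-module $N$.

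For (i), fix $s \in S$ and choose a flat resolution $0 \to F_k \to \cdots \to F_0 \to M \to 0$ of length $k$, with $i$-th syzygy $\Omega_i(M) \subseteq F_{i-1}$. Flatness of $F_{i-1}$ forces multiplication by $s$ to be injective on it, so $\Omega_i(M)[s] = 0$ and therefore $\Tor_1^R(\Omega_i(M), R/sR) = 0$ for every $i \geq 1$. Combined with $M \otimes_R R/sR = 0$ (from $S$-divisibility), tensoring the resolution with $R/sR$ will produce both a length-$(k{-}1)$ flat $R/sR$-resolution of $\Omega_1(M) \otimes_R R/sR$ and a short exact sequence
\[
0 \to M[s] \to \Omega_1(M) \otimes_R R/sR \to F_0 \otimes_R R/sR \to 0
\]
of $R/sR$-modules. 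Since $F_0 \otimes_R R/sR$ is flat over $R/sR$, a routine dimension-shift then yields $\fd_{R/sR}(M[s]) \leq k-1$.

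For (ii), the cleanest route is to observe that $\Gamma_S(M) = \bigcup_{s \in S} M[s]$, and that viewing $S$ as a directed poset under divisibility (using $s, t \leq st \in S$) makes the family $\{M[s]\}_{s \in S}$ into a direct system of inclusions whose colimit is exactly this union. Alternatively, matching the style of Lemma~\ref{indstepSk}, one identifies $R[S^{-1}]/R$ with $\varinjlim_{s \in S} R/sR$ and applies $\Tor^R(M,-)$ to $0 \to R \to R[S^{-1}] \to R[S^{-1}]/R \to 0$: flatness of $R[S^{-1}]$ gives $\Gamma_S(M) \cong \Tor_1^R(M, R[S^{-1}]/R)$, and commutation of Tor with filtered colimits yields $\Gamma_S(M) \cong \varinjlim_s M[s]$. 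The only nontrivial ingredient is the vanishing $\Omega_i(M)[s] = 0$ in (i): in the projective case of Lemma~\ref{indstepSk} this was immediate from the syzygies sitting inside free modules, whereas here it uses the mildly subtler fact that an arbitrary flat module has no $s$-torsion when $s$ is regular. Once that is in hand, the rest is a streamlined repackaging of the projective argument that becomes cleaner because Tor commutes with arbitrary filtered colimits, so no enumeration of $S$ is needed.
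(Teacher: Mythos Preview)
Your proposal is correct and matches the paper's proof essentially verbatim: the paper also tensors a length-$k$ flat resolution with $R/sR$, uses torsion-freeness of the first yoke inside a flat module to get $\Tor_1^R(\Upsilon_1(M),R/sR)=0$, and for (ii) uses exactly your second route via $\Gamma_S(M)\cong\Tor_1^R(M,R[S^{-1}]/R)\cong\varinjlim_s M[s]$. The only cosmetic difference is that where you invoke a dimension-shift on the short exact sequence $0\to M[s]\to\Omega_1(M)\otimes_R R/sR\to F_0\otimes_R R/sR\to 0$, the paper phrases the same step as ``this sequence is pure since $F_0\otimes_R R/sR$ is flat''; both amount to the isomorphism $\Tor_i^{R/sR}(M[s],-)\cong\Tor_i^{R/sR}(\Omega_1(M)\otimes_R R/sR,-)$ for $i\geq 1$.
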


\begin{proof}
Let $M \in \Fcal_k(R)$ be an $S$-divisible module and $F^\bullet \to M \to 0$ a flat presentation of $M$ of length $k$. Fix the first yoke $\Upsilon_1(M) = \mathsf{Im}(F_1 \to F_0)$ with respect to the flat presentation of $M$.

(i) Fix $s \in S$. Apply the functor $(- \otimes_R \quotmod{R}{sR})$ to $F^\bullet \to M \to 0$. As $\pd_R(\quotmod{R}{sR}) \leq 1$, $\Tor_i^R(M, \quotmod{R}{sR}) = 0$ for all $i>1$. Next, $\Tor_1^R(\Upsilon_1(M), \quotmod{R}{sR})=0$ as $\Upsilon_1(M)$ is torsion-free, being a submodule of an $R$-flat module. Additionally, by the assumption that $M$ is $S$-divisible, $M \otimes_R \quotmod{R}{sR}=0$. Therefore, we find the following two exact sequences both as $R$-modules and as $\quotmod{R}{sR}$-modules.

\[
\begin{tikzcd}[cramped, sep=small]
0 \arrow[r] & F_k \otimes_R \quotmod{R}{sR} \arrow[r] &  \cdots \arrow[r] &   F_1 \otimes_R \quotmod{R}{sR} \arrow[r] &\Upsilon_1(M) \otimes_R \quotmod{R}{sR} \arrow[r] &0
\end{tikzcd}\\
\]
\[
\begin{tikzcd}[cramped, sep=small]
0 \arrow[r] & \Tor_1^R(M, \quotmod{R}{sR}) \ar[r]& \Upsilon_1(M) \otimes_R \quotmod{R}{sR} \arrow[r] & F_0 \otimes_R \quotmod{R}{sR} \arrow[r] &  0
\end{tikzcd}
\]

The first exact sequence is a $\Fcal_0(\quotmod{R}{sR})$-presentation of $\Upsilon_1(M) \otimes_R \quotmod{R}{sR}$, so $\Upsilon_1(M) \otimes_R \quotmod{R}{sR} \in \Fcal_{k-1}(\quotmod{R}{sR})$. 
Additionally, the second exact sequence is pure in $\Mod \quotmod{R}{sR}$ as $F_0 \otimes_R \quotmod{R}{sR}$ is flat in $\Mod \quotmod{R}{sR}$, therefore we also have $M[s] \cong \Tor_1^R(M, \quotmod{R}{sR}) \in \Fcal_{k-1}(\quotmod{R}{sR})$ for every $s \in S$. 

(ii) Since $\Tor^R_1(M,-)$ commutes with direct limits, we find the following, where $R[S^{-1}]$ is the localisation of $R$ at $S$, and $\quotmod{R[S^{-1}]}{R}$ is a direct limit of the modules in $\{R/sR\}_{s \in S}$, see end of \cref{ss:notationmultsubsets}. 
\begin{align*}
\Gamma_S(M) 	&\cong \Tor^R_1(M, \quotmod{R[S^{-1}]}{R}) \\
			&\cong \Tor^R_1(M, \varinjlim_{s \in S}(\quotmod{R}{sR}))\\
			&\cong \varinjlim_{s \in S} \Tor^R_1(M, \quotmod{R}{sR})\\
			&\cong \varinjlim_{s \in S} M[s]\\
\end{align*}

\end{proof}

\begin{rmk}
Unlike in the proof of \cref{indstepSk}, there is no size restriction on the size of the multiplicative subset $S$ in \cref{indstepCk}. Therefore, in particular there is no restriction on the size of the direct limit used to construct $\quotmod{R[S^{-1}]}{R}$, and the modules $R[S^{-1}], \quotmod{R[S^{-1}]}{R}$ are not necessarily of projective dimension one. However, this does not pose an issue in the proof of \cref{indstepCk}. 
\end{rmk}

\begin{prop}\label{Bimp1}
Let $R$ be a commutative noetherian ring and suppose $(C_{k+1})$ holds. Then $\Fcal_k = \varinjlim \Pcal_k^{< \aleph_0}$.

\end{prop}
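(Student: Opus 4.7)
The plan is to proceed by induction on $k$, paralleling \cref{Skimpliesft} but adapted to flat dimension and the direct-limit closure $\Lcal_k \vcentcolon= \varinjlim \Pcal_k^{<\aleph_0}(R)$. The base case $k=0$ is the classical Govorov--Lazard theorem $\Fcal_0 = \varinjlim \Pcal_0^{<\aleph_0}$, while $(C_1)$ is automatic since $\min\{\height(\pp),1\}-1 \leq 0 \leq \depth(R_\pp)$ for every prime $\pp$.

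For the inductive step I would assume the result for $k-1$ over any commutative noetherian ring satisfying $(C_k)$, suppose $R$ satisfies $(C_{k+1})$, and fix $M \in \Fcal_k(R)$. By \cref{ss:injlim}, $\Lcal_k$ is the left-hand class of a cotorsion pair, hence closed under extensions and direct summands, and by definition it is closed under direct limits. Let $\Sigma$ denote the multiplicative set of regular elements of $R$ and $Q = R[\Sigma^{-1}]$. The first move is to reduce to the $\Sigma$-divisible case. The cotorsion pair $(\Acal_\Sigma, \Dcal_\Sigma)$ generated by the set $\{\quotmod{R}{sR}\}_{s \in \Sigma}$ is complete by Eklof--Trlifaj (even for uncountable $\Sigma$), so I can fix a special preenvelope $0 \to M \to D \to A \to 0$ with $D$ $\Sigma$-divisible and $A$ a summand of a module filtered by $\{R\} \cup \{\quotmod{R}{sR}\}_{s \in \Sigma}$. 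Any such filtered module lies in $\Lcal_1$ by transfinite induction (successor stages use closure under extensions, limit stages use closure under direct limits), and closure under summands then gives $A \in \Lcal_1 \subseteq \Lcal_k$. Since $A \in \Pcal_1 \subseteq \Fcal_k$ for $k \geq 1$, the short exact sequence forces $D \in \Fcal_k$.

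It remains to show $D \in \Lcal_k$. I plan to tensor $0 \to R \to Q \to Q/R \to 0$ with $D$; because $D \otimes_R \quotmod{R}{sR} = 0$ for every regular $s$ and $Q/R = \varinjlim_{s\in \Sigma} \quotmod{R}{sR}$, the four-term sequence collapses to $0 \to \Gamma_\Sigma(D) \to D \to D \otimes_R Q \to 0$. For the quotient, $(C_{k+1})$ implies $(C_2)$, so \cref{C2FdimQ} yields $\WFindim(Q) = 0$; hence $D \otimes_R Q$ is flat over $Q$ and, since $Q$ is flat over $R$, also flat over $R$, placing it in $\Lcal_0 \subseteq \Lcal_k$ by Govorov--Lazard. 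For the kernel, \cref{indstepCk} gives $\Gamma_\Sigma(D) \cong \varinjlim_{s \in \Sigma} D[s]$ with each $D[s] \in \Fcal_{k-1}(\quotmod{R}{sR})$. Since $\quotmod{R}{sR}$ satisfies $(C_k)$ by \cref{serrequotientring}(ii), the induction hypothesis yields $D[s] \in \varinjlim \Pcal_{k-1}^{<\aleph_0}(\quotmod{R}{sR})$, and the first change of rings theorem embeds this into $\varinjlim \Pcal_k^{<\aleph_0}(R) = \Lcal_k$. Closure under direct limits gives $\Gamma_\Sigma(D) \in \Lcal_k$, closure under extensions gives $D \in \Lcal_k$, and a final application to $0 \to M \to D \to A \to 0$ delivers $M \in \Lcal_k$.

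The hardest part is the preenvelope reduction when $\Sigma$ is uncountable: the approach of \cref{suffdiv} and \cref{ss:regmultsubsets} relied on countability of $S$ precisely to guarantee $\pd_R R[S^{-1}] \leq 1$ and thus bounded projective dimension of the cokernel. Here I would circumvent this by working with flat dimension throughout and exploiting the stronger closure properties of $\Lcal_k$ (extensions, direct limits, and summands), together with the fact that $R[\Sigma^{-1}]$ is flat over $R$ irrespective of the size of $\Sigma$. The technical heart is the transfinite argument showing that $\{R\} \cup \{\quotmod{R}{sR}\}_{s \in \Sigma}$-filtered modules lie in $\Lcal_1$; this plays the role of the projective-dimension bound used in \cref{Skimpliesft} and is what allows the whole argument to go through for arbitrary $\Sigma$.
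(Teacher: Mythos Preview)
Your approach is essentially identical to the paper's: induction on $k$, reduce to the $\Sigma$-divisible case via a special $\Dcal_\Sigma$-preenvelope, then split $D$ using $0 \to \Gamma_\Sigma(D) \to D \to D \otimes_R Q \to 0$, handling the quotient via \cref{C2FdimQ} and the kernel via \cref{indstepCk}, \cref{serrequotientring}(ii), and the induction hypothesis. Your transfinite-induction argument for $A \in \Lcal_1$ is a pleasant explicit version of what the paper packages as ``$A \in \Pcal_1 \subseteq \varinjlim \Pcal_1^{<\aleph_0}$''.

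One point to tighten: your final sentence invokes ``a final application'' of closure under extensions to recover $M$ from $0 \to M \to D \to A \to 0$, but extensions go the wrong way here---you need closure of $\Lcal_k$ under \emph{kernels of epimorphisms}. This holds because the cotorsion pair $(\Lcal_k, \Lcal_k^\perp)$ is hereditary (it is generated by $\Pcal_k^{<\aleph_0}$, which is syzygy-closed over a noetherian ring), but you should say so rather than appeal to extension-closure. The paper glosses over the same step with a bare reference to \cref{ss:injlim}, so this is a shared imprecision rather than a divergence in strategy.
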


\begin{proof}
The inclusion $\varinjlim \Pcal_k^{< \aleph_0}\subseteq \Fcal_k $ is clear. The converse will be shown by induction on $k$. For the base step $k=0$, note that $\Fcal_0 = \varinjlim \Pcal_0^{< \aleph_0}$ holds for every ring by the Govorov-Lazard theorem. 

Let $F \in \Fcal_k$, and assume that $(C_{i+1})$ implies $\varinjlim \Pcal_i^{< \aleph_0}= \Fcal_i $ for all natural numbers $i<k$. Let $\Sigma$ be the multiplicative subset of $R$ consisting of all regular elements. We first claim that it is sufficient to show that any $\Sigma$-divisible module in $\Fcal_k$ is in $\varinjlim \Pcal_k^{< \aleph_0}$. In fact, take a special $\Dcal_\Sigma$-preenvelope of some $F \in \Fcal_k$,
\[
\begin{tikzcd}
0 \arrow[r] &F \arrow[r] &   D \arrow[r] &A \arrow[r] &0.
\end{tikzcd}
\]
Then $A$ is in $\Pcal_1$, $\Pcal_1 \subseteq \varinjlim \Pcal_1^{< \aleph_0}$, and $D \in \Fcal_k$. It follows that $F \in \varinjlim \Pcal_k^{< \aleph_0} $ if and only if $D \in \varinjlim \Pcal_k^{< \aleph_0} $ (see \cref{ss:injlim}), therefore we only need to restrict to showing that $\Fcal_k(R)\cap \Dcal_\Sigma \subseteq\varinjlim \Pcal_k^{< \aleph_0}$. 

Let $M \in \Fcal_k(R)\cap \Dcal_\Sigma$, and apply $(M \otimes_R-)$ to the short exact sequence $0 \to R \to Q \to \quotmod{Q}{R} \to 0$ where $Q$ is the classical ring of quotients of $R$,
\[
\begin{tikzcd}
0 \arrow[r] &\Gamma_\Sigma(M) \arrow[r] &   M \arrow[r] &M \otimes_RQ \arrow[r] &0.
\end{tikzcd}
\]
The above sequence is exact by the assumption that $M$ is $\Sigma$-divisible. We will show that the two outer modules are in $\varinjlim \Pcal_k^{< \aleph_0}$.

 We first claim that $M \otimes_RQ \in \varinjlim \Pcal_k^{< \aleph_0}(R)$. By \cref{C2FdimQ}, $\WFindim(Q)=0$, so $M \otimes_RQ \in \Fcal_k(Q) = \Fcal_0(Q)$. As all flat modules over $Q$ are flat over $R$, $M \otimes_RQ \in \Fcal_0(R) = \varinjlim \Pcal_0^{< \aleph_0}(R) \subseteq \varinjlim \Pcal_k^{< \aleph_0}$.

We now claim that $\Gamma_\Sigma(M) \in \varinjlim \Pcal_k^{< \aleph_0}(R)$. By \cref{indstepCk} (ii), \cref{serrequotientring} and the induction hypothesis, $M[s] \in \Fcal_{k-1}(\quotmod{R}{sR}) = \varinjlim \Pcal_{k-1}^{< \aleph_0}(\quotmod{R}{sR})$ for any $s \in \Sigma$, and by the change of rings theorem (see \cite[Theorem 4.3.3]{Wei94}), $\Pcal_{k-1}^{< \aleph_0}(\quotmod{R}{sR}) \subseteq\Pcal_{k}^{< \aleph_0}(R)$, so we conclude that $M[s] \in \varinjlim \Pcal_k^{< \aleph_0}(R)$. By \cref{indstepCk} (ii), $\Gamma_\Sigma(M) $ is a direct limit of the modules $M[s]$ with $s \in \Sigma$, so since $\varinjlim \Pcal_k^{< \aleph_0}(R)$ is closed under direct limits, see \cref{ss:injlim}, $\Gamma_\Sigma(M) \in \varinjlim \Pcal_k^{< \aleph_0}(R)$.

\end{proof}

\begin{prop}\label{Bimp2}
Let $R$ be a commutative noetherian ring and $\Fcal_k = \varinjlim \Pcal_k^{< \aleph_0}$ for $k \geq 0$. Then $(C_{k+1})$ holds. 
\end{prop}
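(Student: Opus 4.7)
The plan is to prove the contrapositive, mirroring \cref{ftimpliesSk} with $\pd$ replaced by $\fd$ throughout. A convenient consequence of working with flat dimension rather than projective dimension is that localization maps are automatically $R$-flat, so the detour through \cref{countsubset} used in \cref{ftimpliesSk} can be skipped. Concretely, assuming $(C_{k+1})$ fails at some prime $\pp$, I will construct an $R$-module $M \in \Fcal_k(R)$ satisfying $\fd_{R_\pp} M > \depth(R_\pp)$, which is enough to force $M \notin \varinjlim \Pcal_k^{<\aleph_0}(R)$.

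Suppose $(C_{k+1})$ fails at $\pp$, so $\depth(R_\pp) < n - 1$ for $n \vcentcolon= \min\{\height(\pp), k+1\}$, that is, $\depth(R_\pp) \leq n - 2$. Invoking Bass's construction \cite[Proposition 5.2]{B62} yields a prime $\qq \subsetneq \pp$, a sequence $\mathbf{a} = (a_1, \dots, a_{n-1})$ of elements of $\qq$, and $s \in \pp \setminus \qq$ such that $\mathbf{a}$ is a regular sequence in $R_\pp[s^{-1}]$. I set
\[
M \vcentcolon= R_\pp[s^{-1}]/\mathbf{a}R_\pp[s^{-1}],
\]
viewed as an $R$-module via $R \to R_\pp[s^{-1}]$. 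The Koszul complex on $\mathbf{a}$ is a free resolution of $M$ over $R_\pp[s^{-1}]$ of length $n-1$, and since $R_\pp[s^{-1}]$ is $R$-flat as a localization, every $R_\pp[s^{-1}]$-flat module is $R$-flat; hence $\fd_R M \leq n - 1 \leq k$ and $M \in \Fcal_k(R)$.

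Assume for contradiction that $M \in \varinjlim \Pcal_k^{<\aleph_0}(R)$. Since localization at $\pp$ commutes with direct limits and sends $\Pcal_k^{<\aleph_0}(R)$ into $\Pcal_k^{<\aleph_0}(R_\pp)$, I obtain $M = M_\pp \in \varinjlim \Pcal_k^{<\aleph_0}(R_\pp)$ (using that $s$, and more generally $R \setminus \pp$, acts invertibly on $M$). Over the local noetherian ring $R_\pp$, the Auslander--Buchsbaum formula bounds $\Pcal_k^{<\aleph_0}(R_\pp) \subseteq \Pcal_{\depth(R_\pp)}^{<\aleph_0}(R_\pp)$, so $\varinjlim \Pcal_k^{<\aleph_0}(R_\pp) \subseteq \Fcal_{\depth(R_\pp)}(R_\pp)$, yielding $\fd_{R_\pp} M \leq \depth(R_\pp) \leq n - 2$. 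The main remaining task is to show $\fd_{R_\pp} M = n - 1$. The upper bound is again the Koszul resolution. For the lower bound, the regularity of $\mathbf{a}$ together with properness of $\mathbf{a}R_\pp[s^{-1}] \subseteq \qq R_\pp[s^{-1}]$ (which holds since $s \notin \qq$) gives $\pd_{R_\pp[s^{-1}]} M = n - 1$; as $M$ is finitely presented over the noetherian ring $R_\pp[s^{-1}]$, also $\fd_{R_\pp[s^{-1}]} M = n - 1$. Any $R_\pp$-flat resolution of $M$ of length strictly less than $n-1$ would base change along the flat map $R_\pp \to R_\pp[s^{-1}]$ to an $R_\pp[s^{-1}]$-flat resolution of $M \otimes_{R_\pp} R_\pp[s^{-1}] = M$ of the same shorter length, a contradiction. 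Since $n - 1 > n - 2 \geq \depth(R_\pp)$, this is the desired contradiction and $(C_{k+1})$ must hold.
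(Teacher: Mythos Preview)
Your proof is correct. It takes a genuinely different route from the paper's, though both rest on Bass's work in \cite{B62}.

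The paper argues abstractly: assuming $\Fcal_k = \varinjlim \Pcal_k^{<\aleph_0}$, it establishes the chain of inclusions
\[
\Fcal_k(R_\pp) \subseteq \Fcal_k(R) \cap \Mod{R_\pp} = \varinjlim \Pcal_k^{<\aleph_0}(R) \cap \Mod{R_\pp} \subseteq \varinjlim \Pcal_k^{<\aleph_0}(R_\pp) \subseteq \Fcal_{\depth(R_\pp)}(R_\pp),
\]
and then invokes \cite[Corollary 5.3]{B62}, which gives $\WFindim(R_\pp) \geq \dim(R_\pp) - 1$, to see that $\Fcal_{\depth(R_\pp)+1}(R_\pp) \supsetneq \Fcal_{\depth(R_\pp)}(R_\pp)$ whenever $\depth(R_\pp) < \height(\pp) - 1$; this forces $k \leq \depth(R_\pp)$. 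No explicit module is ever produced.

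You instead unwind the existence statement hidden inside \cite[Corollary 5.3]{B62} by calling \cite[Proposition 5.2]{B62} directly, exactly in parallel with \cref{ftimpliesSk}, and exhibit a concrete witness $M = R_\pp[s^{-1}]/\mathbf{a}R_\pp[s^{-1}]$ in $\Fcal_k(R) \setminus \varinjlim \Pcal_k^{<\aleph_0}(R)$. Your observation that flat dimension passes cleanly along the flat maps $R \to R_\pp \to R_\pp[s^{-1}]$ is what lets you bypass the countable-localisation trick of \cref{countsubset} needed in the projective-dimension setting. The paper's argument is shorter and more categorical; yours is more explicit, yields a named counterexample, and makes the analogy with \cref{ftimpliesSk} transparent.
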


\begin{proof}
Suppose that $\Fcal_k(R) = \varinjlim \Pcal_k^{< \aleph_0}(R)$ and take a prime ideal $\pp$ such that $\depth(R_\pp) < \height(\pp)-1$. We will show that $\depth (R_\pp) \geq k$.

We have the following inclusions.
\begin{align*}
\Fcal_k(R_\pp) 	& \subseteq \Fcal_k(R) \cap \Mod R_\pp \\
			&= \varinjlim \Pcal_k^{< \aleph_0}(R) \cap \Mod R_\pp \\
			&\subseteq \varinjlim \Pcal_k^{< \aleph_0}(R_\pp) \\
			& \subseteq \varinjlim \Pcal_{\depth (\pp)}^{< \aleph_0}(R_\pp)  \\
			&\subseteq \Fcal_{\depth (\pp)}(R_\pp)  
\end{align*}
The first inclusion follows since flat $R_\pp$-modules are flat as $R$-modules. The second follows by the assumption. The third follows since the tensor product $(- \otimes_RR_\pp)$ commutes with direct limits, and sends finitely presented $R$-modules of finite $R$-projective dimension to finitely presented $R_\pp$-modules of finite $R_\pp$-projective dimension. The fourth follows since $\depth(R_\pp) = \findim(R_\pp)$, so $\Pcal_{i}^{< \aleph_0}(R_\pp)\subseteq \Pcal_{\depth(R_\pp)}^{< \aleph_0}(R_\pp)$ for all natural numbers $i$. The final inclusion is clear.

Moreover, by \cite[Corollary 5.3]{B62} and the assumption, $\WFindim(R_\pp)\geq \dim(R_\pp)-1 = \height(\pp)-1 > \depth(R_\pp)$ 
so $\Fcal_{\depth (R_\pp)+1}(R_\pp) \supsetneq \Fcal_{\depth (R_\pp)}(R_\pp)$ and we can conclude that $k \leq \depth(R_\pp)$, as required.
\end{proof}

\begin{proof}[Proof of \cref{main-B}]
	Follows by combining \cref{Bimp1} and \cref{Bimp2}. The second sentence follows immediately from the fact that $R$ is almost Cohen--Macaulay if and only if it satisfies $(C_k)$ for all $k > 0$.
\end{proof}

\begin{rmk}
	Similarly for the case of finite type of $(\Pcal_k,\Pcal_k\Perp{})$, \cref{main-B} shows that the validity of $k$-dimensional Govorov-Lazard Theorem is a local property, and that it implies the $(k-1)$-dimensional Govorov-Lazard Theorem.
\end{rmk}

\bibliographystyle{amsalpha}
\bibliography{bibitems}
\end{document}